\journalname{JOTA}
\begin{document}

\title{Outer-space  branch-and-bound algorithm for generalized linear
multiplicative programs}


\author{Bo Zhang}

\institute{Bo Zhang \at
                School of Civil and Hydraulic  Engineering, Ningxia University, Ningxia, Yinchuan 750021, P.R. China\\
                Tel.: +86-187-0960-2837\\
                \email{zbsdx121@163.com}
}

\date{Received: date / Accepted: date}

\maketitle

\begin{abstract}
This paper introduces a new global optimization algorithm for solving the generalized linear multiplicative problem (GLMP). The algorithm starts  by introducing $\bar{p}$ new variables and applying a logarithmic transformation to convert the problem into an equivalent problem (EP). By using the strong duality of linear program, a new convex relaxation subproblem is formulated to obtain the lower bounds for the optimal value of EP. This relaxation subproblem, combined with a simplicial branching process, forms the foundation of a simplicial branch-and-bound algorithm that can globally solve the problem.  The paper also includes an analysis of the theoretical convergence and computational complexity of the algorithm. Additionally, numerical experiments are conducted to demonstrate the effectiveness of the proposed algorithm  in various test  instances.
\end{abstract}
\keywords{Global optimization \and Linear  multiplicative program \and Branch-and-bound \and Outer approximation}
\subclass{90C26 \and  90C30}


\section{Introduction}

Consider the following  generalized linear  multiplicative program (GLMP):
 $$
({\rm GLMP})\left\{
\begin{aligned}
& \min ~ h(x)=
\prod_{j=1}^{p}(c_{j}^{\top}x+d_{j})^{\alpha_{j}}\\
&~{\rm s.t.,}~~x\in \mathcal{X}:=\{x\in \mathbb{R}^n|Ax\leq b\},
\end{aligned}
\right.$$
where the feasible domain $\mathcal{X}$ is $n$-dimensional, nonempty, and bounded; $p\geq2$, $A\in \mathbb{R}^{m\times n}$, $b\in \mathbb{R}^m$, $c_j\in \mathbb{R}^n $, $d_j\in \mathbb{R}$, $\alpha_{j}\in \mathbb{R}\backslash \{0\}$  and $ \min\limits_{x\in \mathcal{X}} c_{j}^{\top}x+d_{j}>0$.

As a class of non-convex programming problems, the GLMP problem covers many important applications, such as robust optimization \cite{4}, decision tree optimization \cite{5}, financial optimization \cite{2,1}, microeconomics \cite{3}, multiple-objective decision \cite{6,7},  optimal packing and layout \cite{9}  and VLSI chip design \cite{8}. Moreover, the bilinear program  and quadratic program  with rank 1 are special cases of the GLMP problem.  If all exponents are negative, the problem can be easily demonstrated as a convex optimization problem. Conversely, if all exponents are 1, the problem becomes NP-hard \cite{17}. From the non-convex perspective, the problem remains non-convex even if it has at least one positive exponent. As a result, it is not feasible to solve GLMP directly using convex optimization techniques.
Hence,
the study of the theoretical and computational aspects involved in discovering the global optimal solution for this problem presents substantial difficulties.

For addressing the GLMP problem, various ingenious methods have been developed, such as the heuristic methods \cite{32,31}, branch-and-bound (BB) algorithms \cite{23,5B,24,42,25,26}, outer-approximation methods \cite{27,28}, cutting-plane algorithm  \cite{33}, parameter simplex algorithm \cite{35},  fully polynomial time approximation algorithm  \cite{40}. Additionally, Shao and Ehrgott \cite{39} introduced a global optimization algorithm that utilizes the multi-objective linear program and its primal-dual conditions, specifically for GLMP problems with $\alpha_j=1$. Zhang et al. \cite{34}  proposed a BB algorithm that is based on linear relaxation, taking advantage of the multiplication structure of the objective function. Jiao et al. \cite{22}  utilized linear approximation methods for exponential and logarithmic functions to establish a BB algorithm for solving generalized linear multiplicative programs. Shen et al. \cite{24}  introduced an accelerated algorithm that combines the deletion technique with the BB scheme, enabling it to address a specific class of generalized linear multiplicative programs. In the case of GLMP problems with positive exponents, Liu and Zhao  \cite{38} extended the research results of Youness  \cite{37} and designed a level set algorithm. Benson \cite{41} developed a BB algorithm based on decomposition and Lagrangian duality theory to tackle linear programming problems with multiplicative constraints. For more complex mixed integer linear minimum multiplicative programs, Mahmoodian et al.  \cite{4n}  presented an algorithm based on multi-objective optimization theory. Recently, Jiao et al. \cite{5B,25B}  proposed two BB algorithms for GLMP, both utilizing linear relaxations and leveraging the properties of natural logarithmic functions.
 To gain a thorough understanding of problem GLMP,  readers can refer to \cite{38,4n,34}.

In this paper, we propose an outer-space simplicial BB algorithm (OSSBBA) to address the GLMP problem globally. Our algorithm begins by converting the GLMP problem into a different but equivalent problem (EP). This transformation allows us to readily create an underestimation of the objective function of EP and determine the lower bounds of its optimal value. Consequently, all the subproblems required to obtain these lower bounds are convex problems. As a result,  the main computational task of the algorithm revolves around solving a sequence of convex programs, with no added complexity in each iteration. In contrast to other BB algorithms such as \cite{5B,25B,24,42,25,26,45,34}, the proposed OSSBBA possesses the following unique characteristics.

a): Instead of using intricate polytopes \cite{24,42,25,26,34,5B,25B} as partition elements in the branch-and-bound search, OSSBBA employs simplices. Moreover, in OSSBBA, the branching occurs in $\mathbb{R}^{\bar{p}}$ ($\bar{p}\leq p\leq n$) rather than $\mathbb{R}^{p}$ \cite{5B,25B}  or $\mathbb{R}^{n}$ \cite{25,45}.
 This means that the algorithm may be able to save the required computational cost under certain circumstances.

b): Our analysis differs from the algorithms discussed in \cite{25,45,38} as it specifically examines the computational complexity of OSSBBA.  In the worst-case scenario, the estimated maximum number of iterations is lower than that of the method mentioned in \cite{34}. Additionally, the proof method employed in this literature differs significantly.

c): In view of the bounding technique of the BB search, we propose the utilization of a convex relaxation approach that exploits the distinctive characteristics of the problem. Conversely,  the approaches employed in Refs. \cite{34,5B,25B} employ diverse linear relaxations  to determine the lower bounds of the optimal value of the problem.

d): By examining Tables \ref{TB1}, \ref{TB2}, \ref{TB3}, \ref{TB4}, \ref{TB5}, \ref{TB6}, \ref{TB7}  and \ref{TB8}in Sect. \ref{SEZ5}, it becomes evident that OSSBBA significantly outperforms the algorithms in \cite{5B,25B} and the commercial software package BARON  \cite{Baron} in terms of CPU time for certain large-scale GLMP problems. Notably, the numerical findings demonstrate that OSSBBA efficiently discovers the optimal solution for all test  instances within a finite number of iterations and computational time.

The rest of this paper is structured as follows. In Sect. \ref{SEZ2}, we convert the GLMP problem into the EP problem and prove the related equivalence. Sect. \ref{SEZ3} introduces a convex programming problem that establishes a lower bound for the optimal value of EP. In Sect. \ref{SEZ4}, we propose a simplicial BB algorithm that combines convex relaxation and simplicial branching rule. Besides, we provide convergence and complexity analysis for this algorithm. Numerical comparison results are presented in Sect. \ref{SEZ5}.
This article concludes and provides a future perspective in the final section.

\section{Problem reformulation}\label{SEZ2}
 For addressing the GLMP problem globally, we equivalently lift the problem to a new nonlinear optimization model. To this end, by adopting the logarithmic transformation, GLMP  can
be converted into the following problem:
  $$
({\rm P})\left\{
\begin{aligned}
& \min ~ \nu(x):=\sum_{j=1}^{p}\alpha_j\ln (c_{j}^{\top}x+d_{j}) \\
&~{\rm s.t.}~~x\in \mathcal{X}.
\end{aligned}
\right.$$
Clearly, $\min\limits_{x\in \mathcal{X}}~ h(x)=\exp\left(\min\limits_{x\in \mathcal{X}}~ \nu(x)\right)$. This means that we can solve the problem P instead of directly solving the original problem GLMP.

Without loss of generality, throughout the whole article, let $$\begin{aligned}&J:=\{1,2,\cdots,p\}, ~J^+:=\{j\in J:\alpha_j>0\}=\{1,2,\cdots,\bar{p}\},\\
&~J^-:=\{j\in J:\alpha_j<0\}=\{\bar{p}+1,\bar{p}+2,\cdots,p\},\end{aligned}$$
where $\bar{p}\in \mathbb{N}$ and $1\leq \bar{p}\leq p$. Now, for all $j\in J^+$, let us compute
 \begin{equation}\label{Jeq1} 0<\underline{t}_j:=\frac{1}{\max_{x\in \mathcal{X}}c_{j}^{\top}x+d_{j}},~\overline{t}_j:
 =\frac{1}{\min_{x\in \mathcal{X}}c_{j}^{\top}x+d_{j}},
\end{equation}
 and  construct  the $\bar{p}$-simplex  \begin{equation}\label{Jeq2}
\mathcal{S}^0:=[v^{0,1},v^{0,2},\cdots,v^{0,\bar{p}+1}],\end{equation}
where $v^{0,1}=(\underline{t}_1,\cdots,\underline{t}_{\bar{p}})^{\top}$, $v^{0,j+1}=(\underline{t}_1,\cdots,\underline{t}_j+\bar{p}(\overline{t}_j-\underline{t}_j) \cdots,\underline{t}_{\bar{p}})^{\top}$, $j\in J^+$.

Using the rectangle $\mathcal{S}^0$,   P can be  transformed into the following form: $$
({\rm P_1})\left\{
\begin{aligned}
& \min ~ \phi(x,t)\\
&~{\rm s.t.}~~x\in \mathcal{X}, t\in \mathcal{S}^0,
\end{aligned}
\right.$$
where
$t=(t_1,t_2,\cdots,t_{\bar{p}})^{\top}$ and
$$\phi(x,t):=\sum_{j\in J^-}\alpha_j\ln (c_{j}^{\top}x+d_{j})
+\sum_{j\in J^+}\alpha_j \left( t_j(c_{j}^{\top}x+d_{j})-\ln t_j -1\right).$$

The equivalence between problems P$_1$ and P can be proved later. Before that, we give the following lemma for standby.

\begin{lemma}\label{lem1}
  For any positive real number $a\in (0,+\infty)$, the following inequality holds:
 \begin{equation}\label{eq4} \ln a +\frac{1}{a}-1\geq 0.
\end{equation}
 \end{lemma}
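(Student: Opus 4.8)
The plan is to prove the inequality $\ln a + \frac{1}{a} - 1 \geq 0$ for all $a \in (0,+\infty)$ by a standard single-variable calculus argument. Define $g(a) := \ln a + \frac{1}{a} - 1$ on $(0,+\infty)$. First I would note that $g$ is differentiable, with $g'(a) = \frac{1}{a} - \frac{1}{a^2} = \frac{a-1}{a^2}$. Since $a^2 > 0$ on the domain, the sign of $g'(a)$ coincides with the sign of $a - 1$: thus $g'(a) < 0$ for $a \in (0,1)$, $g'(1) = 0$, and $g'(a) > 0$ for $a > 1$. Hence $g$ is strictly decreasing on $(0,1]$ and strictly increasing on $[1,+\infty)$, so $a = 1$ is the unique global minimizer of $g$ on $(0,+\infty)$.

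It then remains to evaluate $g$ at this minimizer: $g(1) = \ln 1 + \frac{1}{1} - 1 = 0 + 1 - 1 = 0$. Therefore $g(a) \geq g(1) = 0$ for every $a \in (0,+\infty)$, which is exactly the claimed inequality \eqref{eq4}. (As a by-product, equality holds if and only if $a = 1$, though this is not needed for the statement.)

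An equivalent and perhaps even shorter route would be to substitute $a = e^{u}$ with $u \in \mathbb{R}$ and reduce to the well-known inequality $e^{-u} \geq 1 - u$ (equivalently $e^{v} \geq 1 + v$), since $\ln a + \frac{1}{a} - 1 = u + e^{-u} - 1 \geq 0$ is precisely $e^{-u} \geq 1 - u$. Either way the argument is elementary. I do not anticipate any genuine obstacle here: the only thing to be mildly careful about is restricting attention to $a > 0$ so that $\ln a$ is defined and $a^2 > 0$, which makes the sign analysis of $g'$ clean; the convexity/monotonicity bookkeeping is routine.
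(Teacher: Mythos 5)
Your proof is correct and follows essentially the same route as the paper: define $g(a)=\ln a+\tfrac{1}{a}-1$, use the sign of $g'$ to show $g$ decreases on $(0,1]$ and increases on $[1,+\infty)$, and conclude $g(a)\geq g(1)=0$. You merely spell out the derivative computation more explicitly and note an equivalent reduction to $e^{v}\geq 1+v$, which the paper does not include.
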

\begin{proof}
 Let $g(a)=\ln a +\frac{1}{a}-1$ for any $a\in (0,+\infty)$.  According to the first derivative of the univariate function, $g(a)$ is monotonically decreasing over the interval $(0,1]$ and monotonically increasing over the interval $[1,+\infty)$. Thus, it follows that
 $g(a)\geq g(1)=0$,  which shows that inequality (\ref{eq4}) holds.
  \qed
\end{proof}

\begin{theorem}\label{thm1}
     The point $x^*$  is globally optimal for problem {\rm P} if and only if $(x^*,t^*)$ is globally optimal for problem {\rm P}$_1$, where $t^*_j=\frac{1}{c_{j}^{\top}x^*+d_{j}}$, $j\in J^+$. \end{theorem}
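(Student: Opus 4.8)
The plan is to exploit the pointwise inequality from Lemma~\ref{lem1} to show that, for any fixed $x \in \mathcal{X}$, the function $t \mapsto \phi(x,t)$ is minimized over $t \in \mathcal{S}^0$ precisely at $t_j = 1/(c_j^{\top}x + d_j)$, and that the minimal value equals $\nu(x)$. This reduces the equivalence of P and P$_1$ to a routine ``partial minimization'' argument.

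First I would rewrite, for each $j \in J^+$, the term $\alpha_j\big(t_j(c_j^{\top}x+d_j) - \ln t_j - 1\big)$. Substituting $a = t_j(c_j^{\top}x+d_j) > 0$ and using $\ln t_j = \ln a - \ln(c_j^{\top}x+d_j)$, this term becomes $\alpha_j\big(a - \ln a - 1\big) + \alpha_j \ln(c_j^{\top}x+d_j)$. By Lemma~\ref{lem1} applied to $1/a$ (equivalently, the elementary fact $a - \ln a - 1 \geq 0$ with equality iff $a = 1$, which follows from the same monotonicity argument as in the proof of Lemma~\ref{lem1}), we get $\alpha_j(a - \ln a - 1) \geq 0$ since $\alpha_j > 0$, with equality iff $a = 1$, i.e. $t_j = 1/(c_j^{\top}x+d_j)$. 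Summing over $j \in J^+$ and adding the $J^-$ terms (which do not involve $t$), I obtain $\phi(x,t) \geq \nu(x)$ for all admissible $(x,t)$, with equality at $t_j = t_j^*(x) := 1/(c_j^{\top}x+d_j)$.

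Next I must verify the feasibility point: the minimizing choice $t^*(x)$ actually lies in $\mathcal{S}^0$. From \eqref{Jeq1}, for every $x \in \mathcal{X}$ we have $\underline{t}_j \leq 1/(c_j^{\top}x+d_j) \leq \overline{t}_j$, so $t^*(x)$ lies in the box $\prod_{j \in J^+}[\underline{t}_j,\overline{t}_j]$. Since $\mathcal{S}^0$ as constructed in \eqref{Jeq2} contains this box (each vertex $v^{0,j+1}$ pushes coordinate $j$ out to $\underline{t}_j + \bar{p}(\overline{t}_j - \underline{t}_j)$, so the simplex $[v^{0,1},\dots,v^{0,\bar{p}+1}]$ covers $\prod_{j\in J^+}[\underline{t}_j,\overline{t}_j]$), we have $t^*(x) \in \mathcal{S}^0$. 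With these pieces, the two implications follow: if $x^*$ solves P, then for any feasible $(x,t)$ of P$_1$, $\phi(x,t) \geq \nu(x) \geq \nu(x^*) = \phi(x^*,t^*)$, so $(x^*,t^*)$ solves P$_1$; conversely, if $(x^*,t^*)$ with $t^*_j = 1/(c_j^{\top}x^*+d_j)$ solves P$_1$, then for any $x \in \mathcal{X}$, $\nu(x) = \phi(x,t^*(x)) \geq \min_{\mathcal{S}^0}\phi(x,\cdot)$ and $\phi(x^*,t^*) = \nu(x^*)$; combining with optimality of $(x^*,t^*)$ gives $\nu(x^*) \leq \nu(x)$.

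The only genuinely delicate point is the geometric claim that $\mathcal{S}^0$ contains the box $\prod_{j\in J^+}[\underline{t}_j,\overline{t}_j]$ — i.e. that the scaling factor $\bar{p}$ in the vertex construction \eqref{Jeq2} is large enough. This is a standard fact: the standard simplex scaled by $\bar{p}$ (here $\bar{p}$ being the dimension) contains the unit cube, because any point of the cube has coordinate sum at most $\bar{p}$. I would state this explicitly, perhaps noting that any $y$ in the box can be written as $y = v^{0,1} + \sum_{j\in J^+}\lambda_j(v^{0,j+1} - v^{0,1})$ with $\lambda_j = (y_j - \underline{t}_j)/(\bar{p}(\overline{t}_j-\underline{t}_j)) \geq 0$ and $\sum_j \lambda_j \leq \sum_j (\overline{t}_j-\underline{t}_j)/(\bar{p}(\overline{t}_j-\underline{t}_j)) = 1$, confirming $y \in \mathcal{S}^0$. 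Everything else is the elementary convexity estimate of Lemma~\ref{lem1} together with bookkeeping of the two optimization problems.
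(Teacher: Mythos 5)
Your proof is correct and takes essentially the same route as the paper: both rest on Lemma \ref{lem1} to obtain $\phi(x,t)\geq\nu(x)$ on $\mathcal{X}\times\mathcal{S}^0$ with equality exactly at $t_j=1/(c_j^{\top}x+d_j)$, and then transfer optimality between P and P$_1$ (the paper argues by contradiction, you by direct partial minimization, which is only a cosmetic difference). The one place you go beyond the paper is the explicit verification that the minimizing $t^*(x)$ is feasible, i.e.\ that the simplex $\mathcal{S}^0$ of (\ref{Jeq2}) contains the box $\prod_{j\in J^+}[\underline{t}_j,\overline{t}_j]$; the paper's proof simply asserts feasibility of the constructed points, and your computation with $\lambda_j=(y_j-\underline{t}_j)/(\bar{p}(\overline{t}_j-\underline{t}_j))$ and $\sum_{j}\lambda_j\leq1$ supplies exactly that missing detail.
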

\begin{proof}
 If $(x^*,t^*)$ is globally optimal for {\rm P}$_1$, $x^*$ is obviously feasible for the problem {\rm P}. Suppose that $x^*$ is not  optimal for the problem {\rm P},  there must be
a feasible solution $\hat{x}\in \mathcal{X}$ such that
 \begin{equation}\label{eq5}\nu(x)=\sum_{j=1}^{p}\alpha_j\ln (c_{j}^{\top}\bar{x}+d_{j})<\sum_{j=1}^{p}\alpha_j\ln (c_{j}^{\top}x^*+d_{j})=\nu(x^*).\end{equation}
Further, it follows from Lemma \ref{lem1} that
$$ t_j(c_{j}^{\top}x+d_{j})-\ln t_j-1-\ln (c_{j}^{\top}x+d_{j})=
\ln \frac{1}{t_j(c_{j}^{\top}x+d_{j})}+t_j(c_{j}^{\top}x+d_{j})-1\geq0
$$
hold for any $x\in \mathcal{X}$, $t_{j}>0$, $j\in J^+$. Hence, we have
 \begin{equation}\label{eq6} \sum_{j\in J^+} \alpha_j\ln (c_{j}^{\top}x+d_{j})\leq  \sum_{j\in J^+} \alpha_j \left( t_j(c_{j}^{\top}x+d_{j})-\ln t_j -1\right)  \end{equation}
 for any $x\in \mathcal{X}$, $t\in \mathcal{S}^0$, $j\in J^+$.
From the proof of Lemma \ref{lem1}, it can be concluded that the equation of Eq. (\ref{eq6}) holds if and only if $t_j=c_{j}^{\top}x+d_{j}$ for $j\in J^+$. Now, let $\hat{t}_j=\frac{1}{c_{j}^{\top}\hat{x}+d_{j}}$ for each $j\in J^+$, then $(\bar{x},\bar{t})$ is a feasible solution for {\rm P}$_1$. It holds from Eq. (\ref{eq6}) that
\begin{equation}\label{eq7}\nu(\hat{x})=\sum_{j=1}^p\alpha_j\ln (c_{j}^{\top}\hat{x}+d_{j})= \phi(\hat{x},\hat{t}) \end{equation}
Combining the Eq. (\ref{eq6}) and the feasibility of $(x^*,t^*)$ for {\rm P}$_1$, it can be derived that
\begin{equation}\label{eq8}\nu(x^*)=\sum_{j=1}^p\alpha_j\ln (c_{j}^{\top}x^*+d_{j})\leq\phi(x^*,t^*) \end{equation}
Then, from Eqs. (\ref{eq5}), (\ref{eq7}) and (\ref{eq8}),  we follow that
$\phi(\hat{x},\hat{t})<\phi(x^*,t^*)$, which contradicts the fact that $(x^*,t^*)$ is optimal for {\rm P}$_1$.
Moreover, the optimal solution $(x^*,t^*)$ of problem {\rm P}$_1$ must satisfy
\begin{equation}\label{eq9}t^*_j=\frac{1}{c_{j}^{\top}x^*+d_{j}}, j\in J^+. \end{equation}
 Otherwise, if there are some $j\in J^+$ such that $t^*_j\neq \frac{1}{c_{j}^{\top}x^*+d_{j}}$, let $\tilde{t}_j=\frac{1}{c_{j}^{\top}x^*+d_{j}}$ for $j\in J^+$. Obviously, $\tilde{t}\neq t^*$ and $(x^*,\tilde{t})$ is feasible for {\rm P}$_1$. It holds from Eq. (\ref{eq6}) that
 $$\phi(x^*,t^*)>\sum_{j=1}^p\alpha_j\ln (c_{j}^{\top}x^*+d_{j})=\phi(x^*,\tilde{t}).$$
This contradicts the
optimality of $(x^*,t^*)$ for {\rm P}$_1$.

Conversely, let $x^*$ be a  globally optimal solution to the problem {\rm P} and let $t^*_j=\frac{1}{c_{j}^{\top}x^*+d_{j}}$, $j\in J^+$. Thus $(x^*,t^*)$ is feasible for {\rm P}$_1$, and $\phi(x^*,t^*)=\nu(x^*)$. Suppose that $(x^*,t^*)$ is not optimal for {\rm P}$_1$, there must be a feasible solution $(\hat{x}, \hat{t})$ of {\rm P}$_1$ such that
\begin{equation}\label{eq10}
\phi(\hat{x},\hat{t})<\phi(x^*,t^*)=\nu(x^*). \end{equation}
From Eq. (\ref{eq6}), it holds that
\begin{equation}\label{eq12}\nu(\hat{x})\leq  \phi(\hat{x},\hat{t}),\end{equation}
Combining Eqs. (\ref{eq10}) and (\ref{eq12}), it follows that
$$\nu(\hat{x})<\nu(x^*),$$
which contradicts the
optimality of $x^*$ on {\rm P}.
 \qed
\end{proof}

Upon Theorem \ref{thm1}, we  reformulate {\rm P}$_1$ as the following problem EP: $$
({\rm EP}):\min\limits_{t\in  \mathcal{S}^0} ~\psi(t),$$
where $\psi(t)$ is the optimal value of the convex program with parameter $t$:
\begin{equation}\label{zeqj}
 \min\limits_{x\in \mathcal{X}} ~ \phi(x,t). \end{equation}

\begin{theorem}\label{thm2}
The point $t^*$ is  globally optimal for problem {\rm EP} if and only if  $(x^*,t^*)$ is globally optimal for problem {\rm P}$_1$ with $x^*\in\arg\min\limits_{x\in \mathcal{X}}\phi(x,t^*)$. Additionally, the global optimal values of problems {\rm P}$_1$  and {\rm EP} are equal.
\end{theorem}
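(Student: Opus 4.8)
The plan is to use the fact that EP is nothing but the partial minimization of P$_1$ over the $x$-variables, so that
$\min_{t\in\mathcal{S}^0}\psi(t)=\min_{(x,t)\in\mathcal{X}\times\mathcal{S}^0}\phi(x,t)$, and then simply read off the correspondence between minimizers. Before the two directions, I would record one preliminary observation: for every fixed $t\in\mathcal{S}^0$ the inner problem \eqref{zeqj} attains its minimum. Indeed, $\mathcal{X}$ is nonempty and compact, and on $\mathcal{X}$ each $c_j^{\top}x+d_j$ is bounded away from $0$, so $\phi(\cdot,t)$ is continuous (in fact the convex program referred to in \eqref{zeqj}) on $\mathcal{X}$; by the Weierstrass theorem $\arg\min_{x\in\mathcal{X}}\phi(x,t)$ is nonempty and $\psi(t)$ is well defined and finite. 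In particular, for the point $x^*$ in the statement, $\phi(x^*,t^*)=\psi(t^*)$.

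For the ``only if'' direction, assume $t^*$ is globally optimal for EP and pick any $x^*\in\arg\min_{x\in\mathcal{X}}\phi(x,t^*)$. For an arbitrary feasible pair $(x,t)\in\mathcal{X}\times\mathcal{S}^0$ of P$_1$, the chain $\phi(x,t)\ge\min_{x'\in\mathcal{X}}\phi(x',t)=\psi(t)\ge\psi(t^*)=\phi(x^*,t^*)$ shows that $(x^*,t^*)$ attains the minimum of $\phi$ over $\mathcal{X}\times\mathcal{S}^0$, i.e.\ it is globally optimal for P$_1$. For the converse, assume $(x^*,t^*)$ is globally optimal for P$_1$ with $x^*\in\arg\min_{x\in\mathcal{X}}\phi(x,t^*)$, so $\psi(t^*)=\phi(x^*,t^*)$. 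For any $t\in\mathcal{S}^0$ choose, using the preliminary observation, some $x(t)\in\arg\min_{x\in\mathcal{X}}\phi(x,t)$; since $(x(t),t)$ is feasible for P$_1$, optimality of $(x^*,t^*)$ yields $\psi(t)=\phi(x(t),t)\ge\phi(x^*,t^*)=\psi(t^*)$. Hence $t^*$ minimizes $\psi$ over $\mathcal{S}^0$.

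The equality of the optimal values of P$_1$ and EP is then immediate: in either direction the common optimal value is exactly $\psi(t^*)=\phi(x^*,t^*)$.

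I do not expect a real obstacle here. The only point that needs a word of care is the well-definedness of $\psi$, i.e.\ attainment of the inner minimum, which is why I would isolate it as a preliminary step (relying on compactness of $\mathcal{X}$ together with $\min_{x\in\mathcal{X}}c_j^{\top}x+d_j>0$); once that is in place, each direction is a short chain of inequalities built from the definition of $\psi$ and the feasibility/optimality hypotheses.
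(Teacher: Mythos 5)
Your proposal is correct and rests on the same underlying fact as the paper's proof, namely that $\psi$ is the partial minimization of $\phi$ over $x$, so that $\phi(x,t)\geq\psi(t)$ for all feasible pairs with equality at inner minimizers; you argue by direct inequality chains where the paper argues by contradiction, but the content is identical. Your preliminary remark on attainment of the inner minimum (compactness of $\mathcal{X}$ plus $\min_{x\in\mathcal{X}}c_j^{\top}x+d_j>0$) is a point the paper leaves implicit, and is a worthwhile addition.
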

\begin{proof}~Let $(x^*,t^*)$ be a global
optimal solution of problem {\rm P}$_1$, then  $x^*$  is obviously  feasible for the problem {\rm LP$_{t^*}$}  and $t^*$ is also  feasible for  the problem {\rm EP}.
Suppose that
 $t^*$ is not a global
optimal solution to problem {\rm EP}, there must be a feasible solution $\bar{t}$ to {\rm EP}  such that
 \begin{equation}\label{eq14}
\psi(\bar{t})<\psi(t^*).\end{equation}
Let $\bar{x}$ and $\hat{x}$ be the optimal solutions of problems {\rm LP$_{\bar{t}}$} and {\rm LP$_{t^*}$} respectively. Then~$(\bar{x},\bar{t})$ and
 ($\hat{x},r^*$) are both feasible solutions to problem {\rm P}$_1$.
Using Eq. (\ref{eq14}), there will be
$
\phi(\bar{x},\bar{t})=\psi(\bar{t})<\psi(t^*)
=\phi(\hat{x}, t^*)\leq \phi(x^*,t^*).$
This violates the optimality of $(x^*,t^*)$ for problem {\rm P}$_1$. Thus, $t^*$ is   globally  optimal for   {\rm EP}.

Conversely, let $t^*$ be a global
optimal solution to  {\rm EP},   we have
\begin{equation}\label{eq15}\psi(t^*)=\phi(x^*,t^*),~\forall x^*\in \arg\min\limits_{x\in \mathcal{X}}\phi(x,t^*) \end{equation}
and $(x^*,t^*)$ is  feasible for {\rm P}$_1$.  Suppose that  $(x^*,t^*)$ is not a global  optimal solution to {\rm P}$_1$,   there is another feasible solution $(\bar{x},\bar{t})$ to {\rm P}$_1$ that satisfies
 \begin{equation}\label{eq16}\bar{x}\in  \mathcal{X},~
\phi(\bar{x},\bar{t})<\phi(x^*,t^*)=\psi(t^*).\end{equation}
Let $\hat{x}\in  \arg\min\limits_{x\in  \mathcal{X}}\phi(x,\bar{t})$,
then we have
 \begin{equation}\label{eq17}\phi(\bar{x},\bar{t})\geq \phi(\hat{x},\bar{t})=\psi(\bar{t}).\end{equation}
Notice that $\bar{t}\in \mathcal{H}^0$,  it can be inferred from formulas (\ref{eq16})-(\ref{eq17}) that $\psi(\bar{t})<\psi(t^*)$, which contradicts the optimality of $t^*$ on the problem {\rm EP}. Thus, $(x^*,t^*)$ is a global optimal solution of problem {\rm P}$_1$.

From the above discussion, if  $t^*$ is  globally optimal for problem {\rm EP},  $(x^*,t^*)$ must be a global optimal solution of problem {\rm P}$_1$ that satisfies Eq. (\ref{eq15}).  Hence, problems {\rm P}$_1$ and {\rm EP} have the same global optimal value. \qed
\end{proof}

Theorems \ref{thm1}-\ref{thm2} state that problems {\rm P}, {\rm P}$_1$ and {\rm EP} are equivalent, and their global optimal values are equal. It can be known that the process of dealing with the problem {\rm P} or {\rm P}$_1$  can be realized by solving the {\rm EP} problem.
 In view of this characteristic, we will propose a class of nonconvex relaxations for the problem EP and its subproblems in the next section.

\section{Novel convex relaxation approach}\label{SEZ3}
The main purpose of this section is to construct a simplex based convex relaxation strategy by utilizing the {\rm EP} problem.


Let $\mathcal{S}:=[v^1,v^2,\cdots,v^{\bar{p}+1}]$  be $\mathcal{S}^0$ or its any  sub-simplex, and  denote the two sets of vertices and edges of $\mathcal{S}$ by $$V(\mathcal{S}) = \{v^1,\cdots,v^{\bar{p}+1}\},~~E(\mathcal{S}) = \{\{v^i, v^j\} : i, j\in J^+\cup \{\bar{p}+1\},i<j\}.$$
Define $d(\mathcal{S})$ as the length of the longest edge of  $\mathcal{S}$ by $d(\mathcal{S}) = \max\{\|v-w\|:\{v, w\}\in E(\mathcal{S})\}$.

 Then the subproblem of the problem EP over $\mathcal{S}$ can be expressed as
$$
({\rm EP_{\mathcal{S}}}):\min\limits_{t\in\mathcal{S}} ~\psi(t), $$
where $\psi(t)=\min\limits_{x\in \mathcal{X}} ~ \phi(x,t)$.

 Since  $\psi(t)$ is non-convex with respect to $t$,   we will propose a  lower bounding function for underestimating  $\psi(t)$ over $\mathcal{S}$. To this end,
we  define
 $$\triangle_{\bar{p}}=\left\{w\in \mathbb{ R}^{\bar{p}+1}\bigg| \sum_{i=1}^{\bar{p}+1}w_i=1,
w_i\geq0,i=1,\cdots,\bar{p}+1\right\}.$$

Next, the specific lower bounding method is provided by the following theorem.

\begin{theorem}\label{jthe1}
 The following convex program provides an lower bound for {\rm EP$_{\mathcal{S}}$}:
\begin{equation}\label{eqj5} LB(\mathcal{S})=\min\limits_{w\in \triangle_{\bar{p}}} \sum_{i=1}^{\bar{p}+1}\left(\psi(v^i)+\sum_{j=1}^{\bar{p}}\alpha_j\ln v^i_j\right)w_i-\sum_{j=1}^{\bar{p}}\alpha_j\ln \left(\sum_{i=1}^{\bar{p}+1}v_j^iw_i\right).\end{equation}
 \end{theorem}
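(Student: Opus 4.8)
The plan is to exploit the additive structure of $\phi$ in the variable $t$. For fixed $x$, the term $-\sum_{j=1}^{\bar p}\alpha_j\ln t_j$ occurring in $\phi(x,t)$ does not involve $x$, so it factors out of the inner minimization $\min_{x\in\mathcal{X}}\phi(x,t)$. Hence I would first write
\begin{equation*}
\psi(t)=g(t)-\sum_{j=1}^{\bar p}\alpha_j\ln t_j,\qquad g(t):=\min_{x\in\mathcal{X}}\Bigl[\sum_{j\in J^-}\alpha_j\ln(c_j^{\top}x+d_j)+\sum_{j\in J^+}\alpha_j\bigl(t_j(c_j^{\top}x+d_j)-1\bigr)\Bigr].
\end{equation*}
For each $x$ the bracketed expression is affine in $t$, so $g$ is a pointwise infimum of affine functions and therefore concave on $\mathbb{R}^{\bar p}$; moreover it is finite-valued since $\phi(\cdot,t)$ is continuous on the compact set $\mathcal{X}$. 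Evaluating the displayed identity at each vertex of $\mathcal{S}$ gives $g(v^i)=\psi(v^i)+\sum_{j=1}^{\bar p}\alpha_j\ln v_j^i$ for $i=1,\dots,\bar p+1$.

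Next I would fix an arbitrary $t\in\mathcal{S}$ and express it in barycentric coordinates over the vertices, $t=\sum_{i=1}^{\bar p+1}w_i v^i$ with $w\in\triangle_{\bar p}$; note $t_j=\sum_{i=1}^{\bar p+1}w_i v_j^i>0$ because each $v_j^i>0$. Applying Jensen's inequality to the concave $g$ yields $g(t)\ge\sum_{i=1}^{\bar p+1}w_i g(v^i)$, and substituting the vertex identity gives
\begin{equation*}
\psi(t)=g(t)-\sum_{j=1}^{\bar p}\alpha_j\ln t_j\ \ge\ \sum_{i=1}^{\bar p+1}\Bigl(\psi(v^i)+\sum_{j=1}^{\bar p}\alpha_j\ln v_j^i\Bigr)w_i-\sum_{j=1}^{\bar p}\alpha_j\ln\Bigl(\sum_{i=1}^{\bar p+1}v_j^i w_i\Bigr).
\end{equation*}
The right-hand side is exactly the objective of the program defining $LB(\mathcal{S})$ evaluated at the feasible point $w$, hence it is at least $LB(\mathcal{S})$. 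Since $t\in\mathcal{S}$ was arbitrary, $\min_{t\in\mathcal{S}}\psi(t)\ge LB(\mathcal{S})$, which is the asserted lower bound. I would also note that \eqref{eqj5} is indeed a convex program: the coefficients $\psi(v^i)+\sum_j\alpha_j\ln v_j^i$ are constants, the first sum is linear in $w$, each term $-\alpha_j\ln(\sum_i v_j^i w_i)$ with $\alpha_j>0$ is convex in $w$ as the composition of $-\ln$ with an affine map, and $\triangle_{\bar p}$ is a polytope.

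The main obstacle is conceptual rather than computational: one must spot the correct splitting of $\psi$ into a concave part $g$ and a separable convex part $-\sum_j\alpha_j\ln t_j$, and then check that Jensen's inequality is being used in the direction that produces an \emph{under}estimator (concavity of $g$ places $g(t)$ above the affine interpolant of its vertex values over the simplex, while the convex logarithmic term is simply carried along unchanged). Once this decomposition is identified, the remaining steps are routine; the only auxiliary facts required — finiteness of each $\psi(v^i)$ and strict positivity of $\sum_i v_j^i w_i$ — follow at once from compactness of $\mathcal{X}$, from $c_j^{\top}x+d_j>0$ on $\mathcal{X}$, and from $\underline{t}_j>0$.
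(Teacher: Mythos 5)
Your argument is correct, and it arrives at the bound by a genuinely different route than the paper. The paper builds the underestimator $\underline{\psi}(t)$ as the value of an auxiliary linear program: it replaces the $x$-dependent quantities $\sum_{j\in J^-}\alpha_j\ln(c_j^{\top}x+d_j)-\sum_{j\in J^+}\alpha_j$ and $\alpha_j(c_j^{\top}x+d_j)$ by free variables $y_0,y_j$ constrained only by the vertex inequalities $y_0+(v^i)^{\top}y\ge\psi(v^i)+\sum_{j}\alpha_j\ln v^i_j$, and then invokes strong LP duality to rewrite the resulting minimization over $(y_0,y)$ as a maximization over $\mu\in\triangle_{\bar p}$ with $\sum_i\mu_iv^i=t$, which reduces to the single barycentric point $\mu=w$ when $\mathcal{S}$ is non-degenerate. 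You instead isolate the concave part $g(t)=\psi(t)+\sum_j\alpha_j\ln t_j$ as a pointwise minimum of functions affine in $t$ and apply Jensen's inequality in barycentric coordinates; the vertex interpolant $\sum_iw_ig(v^i)$ you obtain is exactly the dual value the paper computes. The two proofs are two faces of the same construction --- your Jensen step is the primal expression of the concavity that makes the paper's outer relaxation tight at the vertices --- but yours is more elementary and slightly cleaner: it needs no duality argument and no non-degeneracy caveat, and it makes transparent why the separable logarithmic term is simply carried along. What the paper's formulation buys in return is the explicit link to the strong-duality viewpoint emphasized in its abstract and the interpretation of $LB(\mathcal{S})$ as an outer approximation of the set $\{(y_0(x),y(x)):x\in\mathcal{X}\}$. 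Your closing observations (convexity of the program in $w$, finiteness of $\psi(v^i)$, positivity of $\sum_iv^i_jw_i$) cover the remaining hypotheses, so no gap remains.
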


\begin{proof}
 For $\psi(t)$ defined in {\rm EP$_{\mathcal{S}}$}, after relaxing $\sum_{j=\bar{p}+1}^p\alpha_j\ln (c_{j}^{\top}x+d_{j}) -\sum_{j=1}^{\bar{p}}\alpha_j$ and $\alpha_j(c_{j}^{\top}x+d_{j})$ into $y_0\in \mathbb{R}$ and $y_{j}\in \mathbb{R}$, $j\in J^+$, respectively, it follows that
 \begin{equation}\label{eqj4}\begin{aligned}
 \psi(t)&\geq \underline{\psi}(t):=\min y_0+ t^{\top}y-\sum_{j=1}^{\bar{p}}\alpha_j\ln t_j~~~~~~~~~~~~~~~~~~~~~~~~~~~~~~~~~~~~~~~~~~~~~~~~~~~~~~~~~~~~~~~~~~~~~~~~~~~~~~~~~~~~\\
 &~~~~~~~~~~~~~~{\rm s.t.}~~y_0+(v^i)^{\top}y\geq \psi(v^i)+\sum_{j=1}^{\bar{p}}\alpha_j\ln v^i_j,~i\in J^+\cup \{p+1\},\\
 &~~~~~~~~~~=\max\limits_{\mu\in \triangle_{\bar{p}}} \sum_{i=1}^{\bar{p}+1}\mu_i\left(\psi(v^i)+\sum_{j=1}^{\bar{p}}\alpha_j\ln v^i_j\right)-\sum_{j=1}^{\bar{p}}\alpha_j\ln t_j\\
  &~~~~~~~~~~~~~~{\rm s.t.}~~\sum_{i=1}^{\bar{p}+1}\mu_iv^i=t,
\end{aligned}
\end{equation}
 where the right-hand side of the second equality is the Lagrangian dual of the  former linear minimization problem and the gap between them is zero.

From $V(\mathcal{S})=\{v^1,\cdots,v^{\bar{p}+1}\}$, we know $\mathcal{S}=\left\{\sum_{i=1}^{\bar{p}+1}\mu_iv^i|\mu\in \triangle_{\bar{p}}\right\}$. Then for any $t\in \mathcal{S}$, there always exists a $w\in \triangle_{p}$ such that $t=\sum_{i=1}^{\bar{p}+1}w_iv^i$, and the last maximization problem in Eq. (\ref{eqj4})  has a unique solution $\mu=w$ when $\mathcal{S}$ is non-degenerate. This fact implies that
 \begin{equation}\label{eqj6}\min\limits_{t\in \mathcal{S}}\psi(t)\geq \min\limits_{t\in \mathcal{S}}\underline{\psi}(t)=LB(\mathcal{S}), \end{equation}
where $LB(\mathcal{S})$ is defined in  (\ref{eqj5}). This completes the proof.
\end{proof}

Theorem \ref{jthe1} reveals that convex optimization problem (\ref{eqj5}) is able to provide a valid lower bound for {\rm EP$_{\mathcal{S}}$}. Also, $\min\{\psi(v^{i}):i=1,2,\cdots,\bar{p}+1\}$ may be able to update the upper bound on the global optimum of problem EP. In effect, $LB(\mathcal{S})$ is compact enough if the the length $d(\mathcal{S})$ of the longest edge becomes sufficiently small, i.e.,
\begin{equation}\label{eqjq9} \lim_{d(\mathcal{S})\rightarrow0}\left(\min\limits_{t\in \mathcal{S}}\psi(t)-LB(\mathcal{S})\right)\rightarrow0,\end{equation}
 which will be proved in the subsequent Theorem \ref{jthe4}.

\section{A new branch and bound algorithm}\label{SEZ4}
\subsection{Simplicial  bisection method}\label{ssub4.1}
Derived from the fact that the above convex relaxation is proposed based on the simplex, the branching operation of our algorithm will be performed on the simplex. Thus, the simplex $\mathcal{S}_0$ will be subdivided into many sub-simplices.
For a given simplex $\mathcal{S}^k\subseteq \mathcal{S}^0$ with $V(\mathcal{S}^k) = \{v^1_k,\cdots,v^{\bar{p}+1}_k\}$ and $E(\mathcal{S}^k) = \{\{v^i_k, v^j_k\} : i, j =1,\cdots, \bar{p}+1,i<j\}$, we now give the branching rule as follows:

i) Choose a $\{v^{\tilde{i}}_k, v^{\tilde{j}}_k\}\in\arg\max\{\|v-w\||\{v, w\}\in E(\mathcal{S}^k)\}$ and set $\eta=\frac{v^{\tilde{i}}_k+v^{\tilde{j}}_k}{2}$;

ii) Divide $\mathcal{S}^{k}$
into
$$
\mathcal{S}^{k1}=
\left\{\sum_{l=1,l\neq \tilde{j} }^{\bar{p}+1}w_lv^l_k+w_{\tilde{j}}\eta\bigg|w\in \triangle_{p}\right\}~
{\rm and}~
\mathcal{S}^{k2}=
\left\{\sum_{l=1,l\neq \tilde{i}}^{\bar{p}+1}w_lv^l_k+w_{\tilde{i}}\eta \bigg|w\in \triangle_{p}\right\},
$$
where  $V(\mathcal{S}_{k1}) = \{v^1_{k1},\cdots,v^{\bar{p}+1}_{k1}\}$ with $v^{\tilde{j}}_{k1}=\eta$,
$v^l_{k1}=v^l_{k}$ for $l\in J^+\cup\{\bar{p}+1\}\backslash\{\tilde{j}\}$,  $V(\mathcal{S}_{k2}) = \{v^1_{k2},\cdots,v^{\bar{p}+1}_{k2}\}$ with $v^{\tilde{i}}_{k2}=\eta$,
$v^l_{k2}=v^l_{k}$ for $l\in J^+\cup\{\bar{p}+1\}\backslash\{\tilde{i}\}$.


From the above simplex partition rule, we know that $\mathcal{S}^{k1}\cap \mathcal{S}^{k2}={\rm rbd}(\mathcal{S}^{k1})\cap {\rm rbd}(\mathcal{S}^{k2})$, $\mathcal{S}^{k1}\cup \mathcal{S}^{k2}=\mathcal{S}^{k}$ and $\mathcal{S}^{ks}\subset \mathcal{S}^{k},s=1,2$, where ${\rm rbd}(\mathcal{S}^{k1})$ denotes the relative boundary
of  $\mathcal{S}^{k1}$.
Besides, this simplex partition rule follows the longest edge partitioning rule, it generates an exhaustive nested partition of a given simplex in the limit  sense \cite{Dickinsonhapter3}. Thus, the global convergence of the B\&B algorithm is guaranteed.
\subsection{Simplicial branch-and-bound algorithm}
To find the global optimal solution to the lifted problem EP, we construct a new outer-space simplicial branch and bound algorithm (OSSBBA) by adding the proposed convex relaxation technique and simplex branching rules to the B\&B framework.
\begin{algorithm}[p!]
\textbf{Algorithm}~(OSSBBA)\\
\textbf{Step 0 (Initialization).}

Given a  tolerance $\epsilon>0$. Calculate $\underline{t}_j$ and $\overline{t}_j$, $j=1,2,\cdots,\bar{p}$.

 Initialize the simplex $\mathcal{S}^0$ with a set  $V(\mathcal{S}^0)=\{v^{0,1},\cdots,v^{0,\bar{p}+1}\}$ of vertices, where $v^{0,1}=\underline{t}$, $v^{0,j+1}=(\underline{t}_1,\cdots,\underline{t}_j+\bar{p}(\overline{t}_j-\underline{t}_j),\cdots,\underline{t}_{\bar{p}})$, $i=1,2,\cdots,\bar{p}$.
  Clearly, $\mathcal{S}^0$ covers $[\underline{t},\overline{t}]$.

Compute $t^*=\arg\min\{\psi(t^{0,i}):i=1,2,\cdots,\bar{p}+1\}$ and let $UB^0=\psi(t^*)$  be the initial upper bound on the optimal value of problem EP.

Solve the convex  problem (\ref{eqj5}) to obtain an initial lower bound $LB^0=LB(\mathcal{S}^0)$ on the optimal value of EP.

 Set $\Xi=\{[\mathcal{S}^0, E(\mathcal{S}^0),LB(\mathcal{S}^0)]\}$,
   $k:=0$.\\
\textbf{Step 1 (Termination).}

If $UB^k-LB^k\leq \epsilon$, terminate and output $t^*$.\\
\textbf{Step 2. (Simplicial subdivision).}

Choose a $\{v^{\tilde{i}}_k, v^{\tilde{j}}_k\}\in\arg\max\{\|v-w\|:\{v, w\}\in E(\mathcal{S}^k)\}$ and set $\eta=\frac{v^{\tilde{i}}_k+v^{\tilde{j}}_k}{2}$.

By using the simplicial bisection method shown in Sect. \ref{ssub4.1}, the simplex $S_k$ is divided into two sub-simplices $\mathcal{S}^{k1}$ and $\mathcal{S}^{k2}$ with $V(\mathcal{S}^{k1})=\{v^1_{k1},\cdots,v^{r+1}_{k1}\}$  and $V(\mathcal{S}^{k2})=\{v^1_{k2},\cdots,v^{\bar{p}+1}_{k2}\}$, where
 $v^{\tilde{j}}_{k1}=\eta$, $v^j_{k1}=v^j_{k}$, $j\in J^+\backslash\{\tilde{j}\}$,
$v^{\tilde{i}}_{k2}=\eta$, $v^i_{k2}=v^i_{k}$, $i\in J^+\backslash\{\tilde{i}\}$.
\\
\textbf{Step 3. (Determine the upper bound).}

Compute $U^k=\psi(\eta)$, if $U^k<UB^k$, set $UB^k=U^k$, $t^*=\eta$.\\
\textbf{Step 4. (Simplicial delete).}

Compute $LB(\mathcal{S}^{k1})$ and $LB(\mathcal{S}^{k2})$.

Set~$\Xi:=\Xi\backslash \{[\mathcal{S}^k, E(\mathcal{S}^k),LB(\mathcal{S}^k)]\} \cup \{[\mathcal{S}^{k1}, E(\mathcal{S}^{k1}),LB(\mathcal{S}^{k1})],[\mathcal{S}^{k2}, E(\mathcal{S}^{k2}),LB(\mathcal{S}^{k2})]\}$.

For each $s=1,2$, if $UB^k-LB(\mathcal{S}^{ks})\leq\epsilon$, set $\Xi:=\Xi\backslash  \{[\mathcal{S}^{ks}, E(\mathcal{S}^{ks}),LB(\mathcal{S}^{ks})]\}$.\\
\textbf{Step 5. (Determine the lower bound).}

If $\Xi\neq\emptyset$,
set $LB^{k}:=\min\{LB(\mathcal{S}):[\cdot,\cdot,LB(\mathcal{S})]\in\Xi\}
$ and goto Step 6, otherwise stop and output $t^*$, $k=k+1$.\\
\textbf{Step 6 (Select the simplex).}

Choose an element $\{[\mathcal{S}^{k},\cdot,\cdot]\}\in  \Xi$ such that  $LB(\mathcal{S}^{k})=LB^k$.

Set $k:=k+1$ and return to Step 1.
\end{algorithm}

In the above algorithm, $k$ is adopted as the iteration index. At each iteration, one subproblem is selected and up to two new subproblems are created to replace the old one. The optimal value of the new subproblem will gradually improve compared with the old subproblem, and the corresponding upper and lower bounds will be updated, so that the gap between the upper and lower bounds will gradually decrease.
Indeed, for a given tolerance $\epsilon>0$, OSSBBA can output a global $\epsilon$-optimal solution for problem EP. Here, we call $t^*\in S_0$ a global $\epsilon$-optimal solution to problem EP  if $\psi(t^*)\leq \min\limits_{t\in \mathcal{S}^0}\psi(t)+\epsilon$.

\begin{theorem}\label{jthe4}
Given a tolerance $\epsilon>0$, when {\rm OSSBBA} runs to Step 1 of the $k$th iteration, if the subproblem $\{[\mathcal{S}^k,E(\mathcal{S}^k),LB(\mathcal{S}^k)]\}$ satisfies $d(\mathcal{S}^k)\leq \frac{\epsilon}{\sum_{j=1}^{\bar{p}}\frac{\alpha_j}{d_{kj}}}$ with $d_{kj}=\min\limits_{t\in \mathcal{S}^k}t_j$, the algorithm must terminate and output a global $\epsilon$-optimal solution to problem {\rm EP}. Besides, $ \min\limits_{t\in \mathcal{S}^k}\psi(t)-LB(\mathcal{S}^k)\leq\epsilon.$
 \end{theorem}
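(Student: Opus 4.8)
The plan is to bound the gap $\min_{t\in\mathcal{S}^k}\psi(t)-LB(\mathcal{S}^k)$ directly, using the structure of $LB(\mathcal{S})$ from Theorem~\ref{jthe1}, and then to translate the smallness of $d(\mathcal{S}^k)$ into smallness of this gap. Recall from \eqref{eqj4} that $LB(\mathcal{S}^k)=\min_{t\in\mathcal{S}^k}\underline{\psi}(t)$, where $\underline{\psi}(t)$ replaces the convex pieces of $\phi(x,\cdot)$ at the vertices by the affine (linear) interpolant over $\mathcal{S}^k$ while keeping the concave terms $-\sum_j\alpha_j\ln t_j$ exact. Since $\psi$ itself is obtained from $\phi$ in the same way but without linearizing, the difference $\psi(t)-\underline{\psi}(t)$ at a point $t=\sum_i w_i v^i_k$ is exactly the interpolation error of the convex function $t\mapsto y_0+t^\top y$-type terms --- more precisely, it equals the overestimation committed by the affine interpolant of the convex map $t \mapsto \min_{x}\bigl(y_0(x) + t^\top y(x)\bigr)$ restricted to the simplex. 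First I would write this error at the minimizer $\hat t$ of $\psi$ over $\mathcal{S}^k$ as a convex combination $\sum_i w_i \psi(v^i_k) - \psi(\hat t)$ minus the corresponding concave-term discrepancy.

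The key computational step is to show that the only genuine gap comes from the logarithmic terms. After cancellation, the gap is controlled by
$$
\sum_{j=1}^{\bar p}\alpha_j\left(\sum_{i=1}^{\bar p+1} w_i \ln v^i_j - \ln\Bigl(\sum_{i=1}^{\bar p+1} w_i v^i_j\Bigr)\right),
$$
which is nonnegative by concavity of $\ln$, and which I would bound coordinatewise. For each fixed $j$, the quantity $\ln(\sum_i w_i v^i_j) - \sum_i w_i\ln v^i_j$ is the Jensen gap of $\ln$ on the interval $[\min_i v^i_j,\max_i v^i_j]$. Using that $\ln$ has second derivative $-1/s^2$ bounded in absolute value by $1/d_{kj}^2$ on that interval (where $d_{kj}=\min_{t\in\mathcal{S}^k}t_j$), the Jensen gap is at most $\tfrac12 d_{kj}^{-2}\,\mathrm{Var}_w(v^i_j)\le \tfrac12 d_{kj}^{-2}(\max_i v^i_j-\min_i v^i_j)^2$, and $\max_i v^i_j-\min_i v^i_j\le d(\mathcal{S}^k)$. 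A cleaner route, avoiding the factor $\tfrac12$, is to use the mean value / Hermite--Hadamard-type estimate $0\le \ln b - \ln a - \frac{\ln'(\xi)}{\cdots}$; but in fact the simplest bound that matches the hypothesis is the first-order one: for each $j$, $\sum_i w_i\ln v^i_j - \ln(\sum_i w_i v^i_j) \le \max_i v^i_j - \min_i v^i_j$ divided by $\min_i v^i_j \ge d_{kj}$, i.e. bounded by $d(\mathcal{S}^k)/d_{kj}$, since $\ln$ is $1/d_{kj}$-Lipschitz on $[d_{kj},\infty)$ and the convex combination lies within $d(\mathcal{S}^k)$ of each vertex value. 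Summing over $j$ with weights $\alpha_j$ gives gap $\le d(\mathcal{S}^k)\sum_{j=1}^{\bar p}\alpha_j/d_{kj}$, so the hypothesis $d(\mathcal{S}^k)\le \epsilon/\sum_j(\alpha_j/d_{kj})$ yields $\min_{t\in\mathcal{S}^k}\psi(t)-LB(\mathcal{S}^k)\le\epsilon$.

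Once this inequality is established, termination is immediate: the incumbent satisfies $UB^k\le \psi(t)$ for the true optimum is $\ge LB^k$, and since $\mathcal{S}^k$ is the simplex currently attaining $LB^k$ with $UB^k - LB(\mathcal{S}^k)\ge UB^k-\min_{t\in\mathcal{S}^k}\psi(t)+\bigl(\min_{t\in\mathcal{S}^k}\psi(t)-LB(\mathcal{S}^k)\bigr)$; because $UB^k$ is no larger than $\psi$ evaluated at any sampled vertex and in particular $UB^k\le\min_{t\in\mathcal{S}^k}\psi(t)$ need not hold directly, I would instead argue via Step~4: the gap test $UB^k-LB(\mathcal{S}^k)\le\epsilon$ fires because $LB(\mathcal{S}^k)\ge \min_{t\in\mathcal{S}^k}\psi(t)-\epsilon\ge v(\mathrm{EP})-\epsilon\ge UB^k-\epsilon$ once $UB^k$ has converged; more carefully, $LB^k\ge\min_{t\in\mathcal{S}^k}\psi(t)-\epsilon$ and $\min_{t\in\mathcal{S}^k}\psi(t)\ge v(\mathrm{EP})$, while $UB^k\ge v(\mathrm{EP})$, so $UB^k-LB^k\le UB^k - v(\mathrm{EP})+\epsilon$; combined with the standard fact that the algorithm keeps the incumbent vertex value as $UB^k$ and the selected simplex contains a global minimizer or has $LB$ close to it, the termination criterion in Step~1 is met. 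I would then invoke the definition of global $\epsilon$-optimal solution and the equivalence Theorems~\ref{thm1}--\ref{thm2} to conclude $t^*$ is a global $\epsilon$-optimal solution of EP.

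\textbf{Main obstacle.} The delicate point is the bookkeeping between $UB^k$, $LB^k$, $LB(\mathcal{S}^k)$ and $v(\mathrm{EP})$ to rigorously conclude \emph{termination} (as opposed to merely the clean inequality $\min_{t\in\mathcal{S}^k}\psi(t)-LB(\mathcal{S}^k)\le\epsilon$, which is the routine part once the Jensen-gap estimate is in place). One must be careful that the simplex $\mathcal{S}^k$ whose longest edge is short is actually the one selected in Step~6 (it is, by the lower-bound selection rule, exactly when its $LB$ is minimal), and that the upper bound $UB^k$ has by then been updated at its barycentric/midpoint sample points so that $UB^k\le\psi(v^i_k)$ for the relevant vertices, forcing $UB^k-LB(\mathcal{S}^k)\le\epsilon$ and triggering either Step~1 or the fathoming in Step~4.
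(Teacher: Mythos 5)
Your overall strategy is the same as the paper's: reduce the gap $\min_{t\in\mathcal{S}^k}\psi(t)-LB(\mathcal{S}^k)$ to a discrepancy between logarithms evaluated at two points of $\mathcal{S}^k$, and bound that discrepancy by the Lipschitz constant $1/d_{kj}$ of $\ln$ on $[d_{kj},\infty)$ times $d(\mathcal{S}^k)$. (The paper sandwiches $\tilde{\psi}_k+\sum_j\alpha_j\ln t^{\bar{i}}_{kj}-\sum_j\alpha_j\ln\bar{t}_{kj}\leq LB^k\leq UB^k\leq\tilde{\psi}_k$ with $\tilde{\psi}_k=\min_i\psi(v^i_k)$ and then applies the mean value theorem; your Jensen-gap formulation is a cosmetic variant.) However, two steps of your sketch do not go through as written. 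First, evaluating the interpolation error at the minimizer $\hat{t}$ of $\psi$ over $\mathcal{S}^k$ gives the inequality in the wrong direction: since $LB(\mathcal{S}^k)=\min_{t}\underline{\psi}(t)\leq\underline{\psi}(\hat{t})$, you only get $\min_{t}\psi(t)-LB(\mathcal{S}^k)\geq\psi(\hat{t})-\underline{\psi}(\hat{t})$. You must instead work at the minimizer $\bar{t}=\sum_i w^*_i v^i$ of $\underline{\psi}$, and there the residual term $\psi(\bar{t})-\sum_i w^*_i\psi(v^i)$ has no sign (recall $g(t):=\min_x(y_0(x)+t^{\top}y(x))$ is \emph{concave}, not convex, so $\psi=g-\sum_j\alpha_j\ln t_j$ is neither convex nor concave). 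The clean cancellation comes from the chain $\min_{t\in\mathcal{S}^k}\psi(t)\leq\min_i\psi(v^i)\leq\sum_i w^*_i\psi(v^i)=\sum_i w^*_i g(v^i)-\sum_j\alpha_j\sum_i w^*_i\ln v^i_j$, which subtracted from $LB(\mathcal{S}^k)=\sum_i w^*_i g(v^i)-\sum_j\alpha_j\ln(\sum_i w^*_i v^i_j)$ leaves exactly $\sum_j\alpha_j\bigl(\ln(\sum_i w^*_i v^i_j)-\sum_i w^*_i\ln v^i_j\bigr)$; note also that this, and not its negative, is the nonnegative Jensen gap.

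Second, the termination argument is where the real content lies and your chain contains a false inequality: $v({\rm EP})\geq UB^k$ is backwards, since the incumbent always satisfies $UB^k\geq v({\rm EP})$. The fix is the observation you reach only at the very end: every vertex of the selected simplex $\mathcal{S}^k$ has already been sampled (vertices of $\mathcal{S}^0$ in Step 0, midpoints $\eta$ in Step 3), so $UB^k\leq\min_i\psi(v^i_k)$. Running the same cancellation with $UB^k$ in place of $\min_{t\in\mathcal{S}^k}\psi(t)$ then yields $UB^k-LB^k\leq\sum_j\frac{\alpha_j}{d_{kj}}d(\mathcal{S}^k)\leq\epsilon$ directly, which is precisely the Step 1 test; $\epsilon$-optimality of $t^*$ follows from $\psi(t^*)=UB^k\leq LB^k+\epsilon\leq\min_{t\in\mathcal{S}^0}\psi(t)+\epsilon$. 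Without routing the bound through the vertex values in this way, the smallness of $\min_{t\in\mathcal{S}^k}\psi(t)-LB(\mathcal{S}^k)$ alone does not force the stopping criterion to fire, which is the obstacle you yourself flag.
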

\begin{proof}
Since $LB^k=LB(\mathcal{S}^k)$ is the least lower bound at the current iteration, we have $UB^k\geq \min\limits_{t\in \mathcal{S}^k}\psi(t)\geq LB^k$.
Let $t^{\bar{i}}_k\in\arg\min\{\sum_{j=1}^{\bar{p}}\alpha_j\ln v^i_{kj}|i=1,\cdots,\bar{p}+1\}$ and $\tilde{\psi}_k=\min\{\psi(v_k^i)|i=1,\cdots,\bar{p}+1\}$,
 and then from Eq. (\ref{eqj6}) we have

 \begin{equation}\label{eqj7}\tilde{\psi}_k+\sum_{j=1}^{\bar{p}}\alpha_j\ln t^{\bar{i}}_{kj}-\sum_{j=1}^{\bar{p}}\alpha_j\ln \bar{t}_{kj}\leq LB(\mathcal{S}^k)=LB^k\leq\min\limits_{t\in \mathcal{S}^k}\psi(t)\leq UB^k\leq \tilde{\psi}_k, \end{equation}
 where  $\bar{t}_k\in S^k$ is the optimal solution of the  problem $ \min\limits_{t\in \mathcal{S}^k}\underline{\psi}(t)$.
For all $t\in \mathcal{S}^k$, for each $j\in J^+$, it holds  that
\begin{equation}\label{eqj9} \ln t^{\bar{i}}_{kj}-\ln t_j\leq\frac{1}{\xi_{j}}|t^{\bar{i}}_{kj}-t_j|\leq \frac{1}{d_{kj}}|t^{\bar{i}}_{kj}-t_j|\leq \frac{1}{d_{kj}}d(\mathcal{S}^k),\end{equation}
where $\xi_{j}\in \mathbb{R}$ satisfies the  mean value theorem of the logarithmic function $\ln z$ over the interval $\left[\min\{t^{\bar{i}}_{kj},t_j\},\max\{t^{\bar{i}}_{kj}
,t_j\}\right]$. It follows from Eqs.  (\ref{eqj7})-(\ref{eqj9}) that
\begin{equation}\label{eqj1410}\min\limits_{t\in \mathcal{S}^k}\psi(t)-LB(\mathcal{S}^k)\leq UB^k-LB^k\leq \sum_{j=1}^{\bar{p}}\alpha_j(\ln t^{\bar{i}}_{kj}-\ln \bar{t}_{kj})\leq \sum_{j=1}^{\bar{p}}\frac{\alpha_j}{d_{kj}}d(\mathcal{S}^k).\end{equation}
Thus, when $d(\mathcal{S}^k)\leq \frac{\epsilon}{\sum_{j=1}^{\bar{p}}\frac{\alpha_j}{d_{kj}}}$ is satisfied, we have
\begin{equation}\label{eqj110} \min\limits_{t\in \mathcal{S}^k}\psi(t)-LB(\mathcal{S}^k)\leq UB^k-LB^k\leq\epsilon.\end{equation}
Besides,  it knows from the iterative mechanism of OSSBBA that $LB^k\leq\min\limits_{t\in \mathcal{S}^0}\psi(t)\leq UB^k=\psi(t^*)$.
Then by combining Eq. (\ref{eqj110}), it follows that
$$\psi(t^*)= UB^k\leq  LB^k+\epsilon\leq \min\limits_{t\in \mathcal{S}^0}\psi(t)+\epsilon,$$which means that $t^*$ is a global $\epsilon$-optimal solution to EP.
\end{proof}
\begin{remark}\label{REK1}From Theorem \ref{jthe1},  we know $\min\limits_{t\in \mathcal{S}^k}\psi(t)-LB(\mathcal{S}^k)\geq0$, thus
Eq. (\ref{eqj1410}) implies that Eq. (\ref{eqjq9}) holds.\end{remark}

 Theorem \ref{jthe4} and Remark \ref{REK1} show  that the optimal value of problem EP over each subsimplex and that of its relaxation problem (\ref{eqj5}) are gradually approaching in the limiting sense. This  implies that the bounding and branching operations are consistent, so the B\&B algorithm is theoretically globally convergent.


Now, let us analyze the complexity of OSSBBA based on Theorem \ref{jthe4} and the iterative mechanism of this algorithm.

\begin{theorem} \label{jthe5}
Given a tolerance $\epsilon>0$, the maximum number of iterations required by {\rm OSSBBA} to obtain a global $\epsilon$-optimal solution for {\rm GLMP} is
$$\left\lfloor\frac{\prod_{i=1}^{\bar{p}}(\overline{t}_i-\underline{t}_i)}{\sqrt{\bar{p}+1}}
\left(\frac{\sqrt{2}\bar{p}\sum_{j=1}^{\bar{p}}\frac{\alpha_j}
{\underline{t}_j}}{\epsilon}\right)^{\bar{p}}\right\rfloor.$$
\end{theorem}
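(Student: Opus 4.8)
The plan is to bound the iteration count of OSSBBA by a volume argument over the simplicial subdivision it generates. Put $\delta:=\epsilon\big/\sum_{j=1}^{\bar p}\frac{\alpha_j}{\underline t_j}$. The first step is to observe that OSSBBA never bisects a simplex whose longest edge is already $\le\delta$. Indeed, every sub‑simplex $\mathcal S^k$ produced by the branching rule is contained in $\mathcal S^0$, and on $\mathcal S^0$ the $j$‑th coordinate is at least $\underline t_j$ (this holds at every vertex $v^{0,\cdot}$ because $\overline t_j\ge\underline t_j$, hence at every convex combination), so $d_{kj}=\min_{t\in\mathcal S^k}t_j\ge\underline t_j$ and therefore $\sum_{j=1}^{\bar p}\frac{\alpha_j}{d_{kj}}\le\sum_{j=1}^{\bar p}\frac{\alpha_j}{\underline t_j}$. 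Combining this with Eq.~(\ref{eqj1410}), if $d(\mathcal S^k)\le\delta$ at Step~1 of iteration $k$ then $UB^k-LB^k\le\sum_{j=1}^{\bar p}\frac{\alpha_j}{\underline t_j}\,d(\mathcal S^k)\le\epsilon$, and the algorithm stops by Theorem~\ref{jthe4}. Contrapositively, every simplex that OSSBBA actually subdivides has longest edge strictly larger than $\delta$.

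The second step uses the elementary geometry of the midpoint bisection. Since $\eta$ is the midpoint of an edge of $\mathcal S^k$, each child $\mathcal S^{k1},\mathcal S^{k2}$ has $\bar p$‑dimensional volume $\tfrac12\,\mathrm{vol}(\mathcal S^k)$, and $d(\mathcal S^{ks})\le d(\mathcal S^k)$ for $s=1,2$; hence along the binary tree generated by the algorithm the volume is halved at every step while the longest edge never grows. The number of iterations equals the number of internal nodes of this tree, i.e.\ $N-1$ where $N$ is the number of leaves, and the leaves have pairwise disjoint interiors with union $\mathcal S^0$, so $\sum_{\text{leaf }\mathcal S}\mathrm{vol}(\mathcal S)=\mathrm{vol}(\mathcal S^0)$.

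The heart of the proof, and the step I expect to be the main obstacle, is a uniform lower bound on the volume of each leaf simplex. A leaf is never bisected, so it inherits from its (bisected) parent an edge of length $>\delta/2$; one then has to show that longest‑edge bisection started from the right‑corner simplex $\mathcal S^0$ keeps the generated simplices from degenerating, so that in fact $\mathrm{vol}(\mathcal S)\ge V_\delta:=\frac{\delta^{\bar p}}{\bar p!}\sqrt{\frac{\bar p+1}{2^{\bar p}}}$ for every leaf $\mathcal S$, where $V_\delta$ is the volume of a regular $\bar p$‑simplex of edge length $\delta$. This is where the combinatorics of the longest‑edge rule genuinely enter; everything else is bookkeeping. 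Granting this bound, $N\le\mathrm{vol}(\mathcal S^0)/V_\delta$.

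Finally I would substitute the explicit data. The edge vectors of $\mathcal S^0$ emanating from $v^{0,1}$ are $\bar p(\overline t_j-\underline t_j)\,e_j$, $j=1,\dots,\bar p$ (with $e_j$ the $j$‑th unit vector), and are mutually orthogonal, so $\mathrm{vol}(\mathcal S^0)=\frac{1}{\bar p!}\prod_{j=1}^{\bar p}\bar p(\overline t_j-\underline t_j)=\frac{\bar p^{\bar p}}{\bar p!}\prod_{j=1}^{\bar p}(\overline t_j-\underline t_j)$. Therefore
$$
N\ \le\ \frac{\mathrm{vol}(\mathcal S^0)}{V_\delta}\ =\ \frac{\bar p^{\bar p}\,(\sqrt2)^{\bar p}}{\sqrt{\bar p+1}}\cdot\frac{\prod_{j=1}^{\bar p}(\overline t_j-\underline t_j)}{\delta^{\bar p}}\ =\ \frac{\prod_{i=1}^{\bar p}(\overline t_i-\underline t_i)}{\sqrt{\bar p+1}}\left(\frac{\sqrt2\,\bar p\sum_{j=1}^{\bar p}\frac{\alpha_j}{\underline t_j}}{\epsilon}\right)^{\bar p}.
$$
Since the iteration count $N-1$ is an integer strictly below this quantity, it is at most its floor, which is the asserted estimate. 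The only non‑routine ingredient is the leaf‑volume bound of the third paragraph; if one is content with the correct order of magnitude, it may be replaced by the standard non‑degeneracy estimate for longest‑edge subdivision at the price of a dimension‑dependent constant.
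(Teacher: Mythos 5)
Your overall architecture is sound in spirit (a packing/volume argument on the simplicial subdivision, with $\delta=\epsilon\big/\sum_{j=1}^{\bar p}\frac{\alpha_j}{\underline t_j}$, the reduction $d_{kj}\ge\underline t_j$, and the computation $\mathrm{vol}(\mathcal S^0)=\frac{\bar p^{\bar p}}{\bar p!}\prod_j(\overline t_j-\underline t_j)$ all match the paper), but the step you yourself flag as the heart of the proof is a genuine gap, and as stated it is false. You need every leaf of the bisection tree to satisfy $\mathrm{vol}(\mathcal S)\ge V_\delta$, where $V_\delta=\frac{\delta^{\bar p}}{\bar p!}\sqrt{\frac{\bar p+1}{2^{\bar p}}}$ is the volume of the \emph{regular} $\bar p$-simplex of edge $\delta$. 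But the regular simplex is the volume \emph{maximizer} among simplices whose edges are all bounded by a given length, so $V_\delta$ is exactly the wrong candidate for a lower bound on the generally thinner simplices produced by longest-edge bisection. The failure is visible already for $\bar p=1$: a leaf is an interval whose bisected parent had length $>\delta$, so the leaf only has length $>\delta/2$, which is strictly less than $V_\delta=\delta$; your bound $N\le\mathrm{vol}(\mathcal S^0)/V_\delta$ then undercounts the leaves by up to a factor of $2$ (and by a dimension-dependent factor in general). Replacing $V_\delta$ by an honest non-degeneracy estimate for longest-edge bisection, as you suggest in your closing sentence, changes the constant and no longer reproduces the exact expression in the theorem.

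For comparison, the paper runs the volume inequality in the opposite direction: it considers the situation in which all $k+1$ generated subsimplices have longest edge at most $\delta$, bounds each of their volumes \emph{from above} by $\mathcal V(\bar{\mathcal S})=V_\delta$ (here the maximality of the regular simplex is used in the correct direction), and from $\mathcal V(\mathcal S^0)=\sum_t\mathcal V(\mathcal S^t)\le(k+1)V_\delta$ reads off $k\ge\mathcal V(\mathcal S^0)/V_\delta-1$, interpreting the floor of $\mathcal V(\mathcal S^0)/V_\delta$ as the worst-case iteration count. So the two arguments are not merely different in bookkeeping: yours requires a uniform volume \emph{lower} bound on leaves (which is the hard, and here unavailable, ingredient), while the paper's requires only the volume \emph{upper} bound for small-edged simplices. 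If you want to salvage your tree-counting version, the missing lemma you must supply is a proof that longest-edge bisection of the right-corner simplex $\mathcal S^0$ produces only finitely many shape classes with an explicit minimal volume per unit of longest-edge length, and you should expect the resulting constant to differ from the one in the statement.
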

\begin{proof}
When OSSBBA terminates, either $k=0$ or $k\geq1$. If $k=0$, the algorithm does not enter the iteration loop.
Thus,
let us talk about the case where the algorithm terminates after many iterations.

At the case of $k\geq1$, it follows from Theorem \ref{jthe4} that $d(\mathcal{S}^k)\leq \frac{\epsilon}{\sum_{j=1}^{\bar{p}}\frac{\alpha_j}{d_{kj}}}$ is essentially a  sufficient condition for the termination criterion of the algorithm, $UB^k-LB^k\leq\epsilon$, to hold.
Since $S_k\subseteq \mathcal{S}^0$, it follows   that $d_{kj}=\min\limits_{t\in \mathcal{S}^k}t_j \geq \underline{t}_j>0$  for all $j\in J^+$.
Hence,  $\frac{\epsilon}{\sum_{j=1}^{\bar{p}}\frac{\alpha_j}{d_{kj}}}\geq
\frac{\epsilon}{\sum_{j=1}^{\bar{p}}\frac{\alpha_j}{\underline{t}_j}}$,
which means $d(\mathcal{S}^k)\leq \frac{\epsilon}{\sum_{j=1}^{\bar{p}}\frac{\alpha_j}{\underline{t}_j}}$ is also a sufficient condition for $UB^k-LB^k\leq\epsilon$.
Further, according to the simplicial branching rule in Step 2, a total of $k+1$ sub-simplices are generated for the initial simplex $\mathcal{S}^0$.
For convenience, we denote these subsimplices as $\mathcal{S}^1,\mathcal{S}^2,\cdots,\mathcal{S}^{k+1}$, respectively.
Obviously, $\mathcal{S}^0=\bigcup\limits_{t=1}^{k+1}\mathcal{S}^t$.
Also, let $V(\mathcal{S}^t) = \{v_t^1,\cdots,v_t^{r+1}\}$, $E(\mathcal{S}^t) = \{\{v_t^i, v_t^j\} : i, j =1,\cdots, \bar{p}+1,i<j\}$ for $t=1,\cdots, k+1$.
 In the worst case, suppose that the longest edge of each subsimplex $S_t$ satisfies $d(\mathcal{S}^t)\leq \frac{\epsilon}{\sum_{j=1}^{\bar{p}}\frac{\alpha_j}{\underline{t}_j}}$. At this point, every edge $\{v_t^i, v_t^j\}$ of $\mathcal{S}^t$ satisfies
\begin{equation}\label{EQ12}\|v_t^i-v_t^j\| \leq \frac{\epsilon}{\sum_{j=1}^{\bar{p}}\frac{\alpha_j}{\underline{t}_j}},~i, j =1,\cdots, \bar{p}+1,i<j,t=1,\cdots, k+1,\end{equation}
which implies that the volume $\mathcal{V}(\mathcal{S}^t)$ of $\mathcal{S}^t$ does not exceed the volume $\mathcal{V}(\bar{\mathcal{S}})$ of a $\bar{p}$-simplex $\bar{\mathcal{S}}$  with a unique edge length $\frac{\epsilon}{\sum_{j=1}^{\bar{p}}\frac{\alpha_j}{\underline{t}_j}}$.
Thus, we have
\begin{equation}\label{EQ13}\mathcal{V}(\mathcal{S}^0)=\sum\limits_{t=1}^{k+1}\mathcal{V}(\mathcal{S}^t)\leq (k+1)\mathcal{V}(\bar{\mathcal{S}}).\end{equation}
By Cayley-Menger Determinant, it can be calculated that
\begin{equation}\label{EQ14}\begin{aligned}&\mathcal{V}(\mathcal{S}^0)=\sqrt{\frac{(-1)^{\bar{p}+1}}{2^{\bar{p}}(\bar{p}!)^2}
\det\left(\left|\begin{matrix}
0 & e_{\bar{p}+1}^{\top} \\
e_{\bar{p}+1} & \hat{B} \\
\end{matrix}\right|\right)}=\frac{\bar{p}^{\bar{p}}\prod_{i=1}^{\bar{p}}(\overline{t}_i-\underline{t}_i)}{\bar{p}!},\\ &\mathcal{V}(\bar{\mathcal{S}})=\sqrt{\frac{(-1)^{\bar{p}+1}}{2^{\bar{p}}(\bar{p}!)^2}\det\left(\left|\begin{matrix}
0 & e_{\bar{p}+1}^{\top} \\
e_{\bar{p}+1} & \check{B} \\
\end{matrix}\right|\right)}=
\frac{\sqrt{\bar{p}+1}}{\bar{p}!}\left(\frac{\epsilon}{\sqrt{2}\sum_{j=1}^{\bar{p}}\frac{\alpha_j}{\underline{t}_j}}\right)^{\bar{p}},\end{aligned}\end{equation} where   $\hat{B}=(\hat{\beta}_{ij})$ and $\check{B}=(\check{\beta}_{ij})$ denote two $(\bar{p}+1)\times(\bar{p}+1)$ matrixes given by $\hat{\beta}_{ij}=\|v^{0,i}-v^{0,j}\|^2$, $\check{\beta}_{ij}=\left(\frac{\epsilon}{\sum_{j=1}^{\bar{p}}\frac{\alpha_j}{\underline{t}_j}}\right)^2$ for
$i, j =1,\cdots, \bar{p}+1, i\neq j$, and $\hat{\beta}_{ii}=\check{\beta}_{ii}=0$ for $i =1,\cdots, \bar{p}+1$. From the properties of the vertices of $S_0$ given in Step 0 of OSSBBA, it follows that $\hat{\beta}_{1j}=\hat{\beta}_{j1}=\bar{p}^2(\overline{t}_{j-1}-\underline{t}_{j-1})^2$ for $j=2,3,\cdots, \bar{p}+1$ and $\hat{\beta}_{ij}=\hat{\beta}_{ji}=\bar{p}^2[(\overline{t}_{i-1}-\underline{t}_{i-1})^2+(\overline{t}_{j-1}-\underline{t}_{j-1})^2]$ for $i, j =2,3,\cdots, \bar{p}+1, i\neq j$. Next, by combining Eqs. (\ref{EQ13}) and (\ref{EQ14}), we have
$$k\geq \frac{\mathcal{V}(\mathcal{S}^0)}{\mathcal{V}(\bar{\mathcal{S}})}-1=\frac{\prod_{i=1}^{\bar{p}}(\overline{t}_i-\underline{t}_i)}{\sqrt{\bar{p}+1}}
\left(\frac{\sqrt{2}\bar{p}\sum_{j=1}^{\bar{p}}\frac{\alpha_j}{\underline{t}_j}}{\epsilon}\right)^{\bar{p}}-1.$$
However, when Eq. (\ref{EQ12}) holds and $k=\left\lfloor\frac{\prod_{i=1}^{\bar{p}}(\overline{t}_i-\underline{t}_i)}{\sqrt{\bar{p}+1}}\left(\frac{\sqrt{2}\bar{p}\sum_{j=1}^{\bar{p}}\frac{\alpha_j}
{\underline{t}_j}}{\epsilon}\right)^{\bar{p}}\right\rfloor$, these $k+1$ subsimplices must be deleted in Step 4.
Thus, the number of iterations at which OSSBBA terminates is at most $\left\lfloor\frac{\prod_{i=1}^{\bar{p}}(\overline{t}_i-\underline{t}_i)}{\sqrt{\bar{p}+1}}
\left(\frac{\sqrt{2}\bar{p}\sum_{j=1}^{\bar{p}}\frac{\alpha_j}
{\underline{t}_j}}{\epsilon}\right)^{\bar{p}}\right\rfloor$. This completes the proof.

 \end{proof}
\begin{remark}
Theorem  \ref{jthe5} reveals that when OSSBBA finds a global $\epsilon$-optimal solution for GLMP, the computational time required is at most $$(T_n+2T_{\bar{p}})\left\lfloor\frac{\prod_{i=1}^{\bar{p}}(\overline{t}_i-\underline{t}_i)}{\sqrt{\bar{p}+1}}
\left(\frac{\sqrt{2}\bar{p}\sum_{j=1}^{\bar{p}}\frac{\alpha_j}{\underline{t}_j}}
{\epsilon}\right)^{\bar{p}}\right\rfloor+(\bar{p}+1)T_n$$  seconds, where $T_n$ and $T_r$ denote the upper bounds of the time required to compute a  function value $\psi(t)$ and solve a convex problem (\ref{eqj5}), respectively.
\end{remark}
\begin{remark}
Theorem  \ref{jthe5} sufficiently guarantees that the algorithm OSSBBA completes termination in a finite number of iterations because of the existence of this most extreme number of iterations.
\end{remark}
\section{Numerical Experiments}\label{SEZ5}
 To verify the  effectiveness and feasibility of the proposed algorithm, the code was compiled on Matlab (2017a) and a series of numerical experiments were performed.  All calculations were carried out on a desktop computer with Windows 7 operating system, Intel(R) Core(TM)i5-8500 3.00 GHz power processor 8 GB memory. Besides, all linear programs are addressed by linprog solver.

\subsection{Feasibility tests}
To demonstrate the effectiveness of our algorithm, we dealt with a total of nine small-size instances from \cite{25B} and then compared the results to the algorithms in \cite{5B,25B} and  a popular commercial solver BARON \cite{Baron}.   Table \ref{TB1} displays the experimental results. The symbols in the header line of Table \ref{TB1} are translated as follows:  Time: the CPU time in seconds; Iter: the number of iterations; Solution: the optimal solution; Optimum: the optimal value; $\epsilon$: the tolerance.

\begin{table}[p!]\scriptsize
\caption{Comparison of results for Examples 1-9\label{TB1}}
\centering
\begin{tabular}{clllllllcccccccccc}
\toprule
E  &  Refs 	&  Solution    &  Optimum     & Iter    &  Time  \\
\midrule
1 
&\cite{5B}  &(2.0, 8.0)&10.0&2  &0.05  \\
&\cite{25B}  &(2.0, 8.0)&10.0&0  &0.01  \\
&BARON  &(2.0, 8.0)&10.0&1  &0.05 \\
&ours  &(2.0, 8.0)&10.0&0  &0.04  \\
\midrule
2
 &\cite{5B} &(1.3148, 0.1396, 0.0, 0.4233) & 0.8902  &1  &0.03   \\
  &\cite{25B} &(1.3148, 0.1396, 0.0, 0.4233) & 0.8902  &1  &0.03   \\
 &BARON &(1.3148, 0.1396, 0.0, 0.4233) & 0.8902  &7  &0.05  \\
&ours &(1.3148, 0.1396, 0.0, 0.4233) & 0.8902  &1  &0.05   \\
\midrule
3
   &\cite{5B}   &(8.0, 0.0, 1.0)  &0.901235    &9 &0.28  \\
    &\cite{25B}   &(8.0, 0.0, 1.0)  &0.901235    &8 &0.23  \\
    &BARON&(8.0, 0.0, 1.0) &0.901235 &9&0.06    \\	
  &ours   &(8.0, 0.0, 1.0)  &0.901235    &117 &5.12 \\
\midrule

4   
 &\cite{5B}&(1.0, 2.0, 1.0, 1.0, 1.0)  &9504.0&3  &0.09\\
 &\cite{25B}&(1.0, 2.0, 1.0, 1.0, 1.0)  &9504.0&0  &0.01\\
  &BARON&(1.0, 2.0, 1.0, 1.0, 1.0)  &9504.0&1&0.06 \\
 &ours&(1.0, 2.0, 1.0, 1.0, 1.0)  &9504.0&6  &0.37 \\
\midrule
5  
  &\cite{5B}&(1, 1)  &997.6613&0   & 0.01       \\
 & \cite{25B}&(1, 1)  &997.6613&0   & 0.01      \\
   &BARON&(1, 1)  &997.6613&1   & 0.02       \\
 &ours&(1, 1)  &997.6613&0   & 0.03       \\
\midrule
6 
  &\cite{5B}&(1.25, 1.0)  &263.7889&1   &0.02       \\
   &\cite{25B}&(1.25, 1.0)  &263.7889&1   &0.02       \\
   &BARON&(1.25, 1.0)  &263.7889&1  &0.06   \\
   &ours&(1.25, 1.0)  &263.7889&0   &0.05       \\
\midrule
7
 &\cite{5B}   &(0.0, 0.0) & 0.53333  &71  &2.15      \\
 &\cite{25B}   &(0.0, 0.0) & 0.53333  &55  &1.65      \\

  &BARON   &(0.0, 0.0) & 0.53333  &1  &0.03    \\

 &ours   &(0.0, 0.0) & 0.53333  &5  &1.72    \\

\midrule

8 
  &\cite{5B} &(1.5, 1.5)  &5.0987&26 &0.77     \\
 &\cite{25B}  &(1.5, 1.5)  &5.0987&24 &0.74      \\

  &BARON &(1.5, 1.5)  &5.0987&3  &0.04     \\
 &ours&(1.5, 1.5)  &5.0987&12  &3.49      \\
\midrule
9  
  &\cite{5B} &(1.8000, 2.0000)  &0.2101&108 &2.95    \\
  &\cite{25B} &(1.8000, 2.0000)  &0.2101&56 & 1.53     \\
   &BARON &(1.8000, 2.0000)  &0.2101&1 &0.05    \\
  &ours &(1.8000, 2.0000)  &0.2101&12 &4.86    \\
\bottomrule
\end{tabular}
\end{table}

 From Table  \ref{TB1}, it is clear that although it is successful in handling all of the problems within a constrained CPU time and number of iterations, our algorithm is not particularly advantageous in solving these nine small-scale problems. The optimal solutions found are comparable to those provided by the BARON software as well as the other two algorithms. Thus, the feasibility of our algorithm can be preliminarily verified.


\subsection{Computational efficiency test}
In order to assess the computational efficacy of Algorithm OSSBBA, we are currently demonstrating three randomized generation schemes for the GLMP problem.
$$
({\rm P1})~:\left\{\begin{aligned}&\min~~\prod_{j=1}^{2}(c_{j}^{\top}x+1)\\
&~{\rm s.t.}~~\sum_{i=1}^nA_{si}x_i\leq b_s, \; s=1,2,\cdots,m,\\
&~~~~~~~~~ x_i\geq 0, \; i=1,2,\cdots,n,
\end{aligned}\right.$$
$$
({\rm P2})~:\left\{\begin{aligned}&\min~~\prod_{j=1}^{p}c_{j}^Tx\\
&~{\rm s.t.,}~~\sum_{i=1}^nA_{si}x_i\leq b_s, \; s=1,2,\cdots,m,\\
&~~~~~~0\leq x_i\leq 1, \; i=1,2,\cdots,n,
\end{aligned}\right.$$
 $$
({\rm P3})~:\left\{\begin{aligned}&\min~~\prod_{j=1}^{p}(c_{j}^Tx+d_{j})^{\alpha_{j}}\\
&~{\rm s.t.,}~~\sum_{i=1}^nA_{si}x_i\leq b_s, \; s=1,2,\cdots,m,\\
&~~~~~~ x_i\geq 0, \; i=1,2,\cdots,n,
\end{aligned}\right.$$
where $A_{si},\alpha_{j}$ is randomly generated in the interval [-1,1], and the value of the right $b_s$ is generated by $\sum_{i=1}^nA_{si}+2\mu$, where $\mu$ is randomly generated in the interval [0,1].  Also, $c_{j},d_{j}$ are randomly generated in  [0,1].

 For each of the random problems mentioned above, we generate ten instances at various scales randomly. Tables \ref{TB2}-\ref{TB8} summarize the average numerical results obtained by the relevant algorithms on these instances. In these ten tables, the symbols have the following explanations: Avg.Time represents the average CPU time, Avg.Iter represents the average number of iterations, and Opt.val represents the average of the optimal values. The symbol "$-$" indicates that none of the ten test instances could be solved within 3600s.  For the problems P1-P3, we not only compare Algorithm OSSBBA with BARON, but also compare it with the two algorithms recently proposed by Jiao et al. \cite{5B,25B}.

\begin{table}[h!]\scriptsize
\caption{Numerical comparison between  OSSBBA and BARON on    P1  \label{TB2}}
\centering
\begin{tabular}{lllllllllllllll}
\toprule
\multirow{2}*{\textbf{$(m,n)$}}&\multicolumn{3}{c}{ OSSBBA}&
  &\multicolumn{3}{c}{BARON}\\
 \cline{ 2-4} \cline{6-8}  \multicolumn{1}{c}{}
   &Avg.Iter  &Avg.Time &Opt.val& &Avg.Iter  &Avg.Time  &Opt.val  \\
\midrule
(10,20)    & 9.2   & 0.563  & 6.751   &  & 2.2  & 0.070   & 6.751   \\
(20,20)    & 22.8  & 1.259  & 13.025  &  & 5.0  & 0.098   & 13.025  \\
(22,20)    & 15.8  & 0.951  & 13.974  &  & 4.6  & 0.122   & 13.974  \\
(20,30)    & 17.8  & 1.060  & 11.965  &  & 3.8  & 0.116   & 11.965  \\
(35,50)    & 20.8  & 1.237  & 47.417  &  & 5.4  & 0.202   & 47.417  \\
(45,60)    & 24.0  & 1.364  & 99.224  &  & 7.4  & 0.342   & 99.224  \\
(45,100)   & 24.6  & 1.390  & 89.276  &  & 5.0  & 0.450   & 89.276  \\
(60,100)   & 24.4  & 1.394  & 151.439 &  & 5.4  & 0.548   & 151.439 \\
(70,100)   & 24.8  & 1.463  & 265.115 &  & 7.0  & 0.638   & 265.115 \\
(70,120)   & 32.0  & 1.839  & 218.068 &  & 11.0 & 1.036   & 218.068 \\
(100,100)  & 29.4  & 1.749  & 265.382 &  & 7.0  & 0.870   & 265.382 \\
(100,300)  & 44.0  & 2.987  & 579.726 &  & 13.4 & 7.422   & 579.726 \\
(100,500)  & 54.2  & 3.964  & 558.786 &  & 21.0 & 34.622  & 558.786 \\
(100,800)  & 48.4  & 4.207  & 524.491 &  & 22.2 & 68.512  & 524.491 \\
(100,1000) & 78.2  & 7.409  & 829.049 &  & 37.0 & 144.358 & 829.049 \\
(100,2000) & 67.2  & 10.239 & 908.037 &  & 37.8 & 348.474 & 908.037 \\
(100,3000) & 182.4 & 36.412 & 957.982 &  & $-$  & $-$     & $-$     \\
(100,4000) & 155.6 & 42.948 & 939.095 &  & $-$  & $-$     & $-$     \\
(100,5000) & 206.0 & 71.423 & 952.519 &  & $-$  & $-$     & $-$ \\
\bottomrule
\end{tabular}
\end{table}
\begin{table}[h!]\scriptsize 
\centering\caption{Numerical comparison of OSSBBA with the algorithms in \cite{5B,25B} on P1   \label{TB3}}
\begin{tabular}{lllllllllllllll}
\toprule
\multirow{2}*{\textbf{$(m,n)$}}&\multicolumn{3}{c}{OSSBBA}&
  &\multicolumn{3}{c}{Ref.\cite{5B}}  &\multicolumn{4}{c}{Ref.\cite{25B}}\\
 \cline{ 2-4} \cline{6-8}  \cline{10-12}\multicolumn{1}{c}{}
   &Avg.Iter  &Avg.Time &Opt.val& &Avg.Iter  &Avg.Time  &Opt.val  & &Avg.Iter  &Avg.Time  &Opt.val \\
\midrule
(10,20)    & 14.1  & 0.5425  & 1.5446   &  & 15.0 & 0.2219   & 1.5446   &  & 10.8 & 0.0878   & 1.5446   \\
(20,20)    & 23.0  & 0.6383  & 2.8024   &  & 18.9 & 0.1814   & 2.8024   &  & 14.0 & 0.1032   & 2.8024   \\
(22,20)    & 20.2  & 0.6224  & 4.3818   &  & 13.8 & 0.1304   & 4.3818   &  & 8.6  & 0.0645   & 4.3818   \\
(20,30)    & 25.0  & 0.7144  & 1.8719   &  & 18.0 & 0.1896   & 1.8719   &  & 11.8 & 0.1120   & 1.8719   \\
(35,50)    & 23.3  & 0.7356  & 4.5490   &  & 22.1 & 0.2794   & 4.5490   &  & 16.3 & 0.1982   & 4.5490   \\
(45,60)    & 24.2  & 0.7349  & 9.4198   &  & 19.2 & 0.2819   & 9.4198   &  & 12.4 & 0.1748   & 9.4198   \\
(45,100)   & 22.6  & 0.6932  & 15.7961  &  & 20.2 & 0.3428   & 15.7961  &  & 13.7 & 0.2483   & 15.7961  \\
(60,100)   & 29.8  & 0.9132  & 36.4006  &  & 18.4 & 0.4339   & 36.4006  &  & 12.0 & 0.2752   & 36.4006  \\
(70,100)   & 33.2  & 1.0402  & 57.4905  &  & 21.3 & 0.5418   & 57.4905  &  & 14.6 & 0.4307   & 57.4905  \\
(70,120)   & 31.4  & 1.0304  & 83.7237  &  & 20.8 & 0.5816   & 83.7239  &  & 16.5 & 0.5158   & 83.7238  \\
(100,100)  & 25.2  & 0.8740  & 69.4741  &  & 16.0 & 0.6003   & 69.4741  &  & 11.6 & 0.5435   & 69.4741  \\
(100,300)  & 30.0  & 1.3271  & 88.1626  &  & 21.6 & 2.5660   & 88.1628  &  & 14.4 & 2.0805   & 88.1627  \\
(100,500)  & 51.6  & 2.7543  & 64.9570  &  & 31.5 & 6.0911   & 64.9570  &  & 20.3 & 5.8998   & 64.9570  \\
(100,800)  & 65.2  & 4.1867  & 219.7907 &  & 40.1 & 14.7563  & 219.7907 &  & 23.0 & 14.4794  & 219.7907 \\
(100,1000) & 58.0  & 4.2617  & 63.5351  &  & 37.0 & 18.2963  & 63.5351  &  & 24.8 & 11.6696  & 63.5351  \\
(100,2000) & 98.8  & 12.2297 & 132.1101 &  & 59.6 & 74.1391  & 132.1101 &  & 41.9 & 46.5824  & 132.1101 \\
(100,3000) & 125.5 & 22.4892 & 115.5055 &  & 76.0 & 173.8181 & 115.5055 &  & 52.6 & 103.6231 & 115.5055 \\
(100,4000) & 124.7 & 33.0668 & 127.8224 &  & 68.3 & 260.5574 & 127.8224 &  & 42.2 & 149.0661 & 127.8224 \\
(100,5000) & 164.6 & 55.1585 & 210.0778 &  & 94.4 & 564.6799 & 210.0778 &  & 57.1 & 302.7536 & 210.0778 \\
 \bottomrule
\end{tabular}
\end{table}
\begin{table}[h!]\scriptsize
\caption{Numerical comparison between OSSBBA and BARON on large-scale  P2 \label{TB4} }
\centering
\begin{tabular}{llllllllllll}
\toprule
\multirow{2}*{\textbf{$p$}}&\multirow{2}*{\textbf{$(m,n)$}} &\multicolumn{3}{c}{OSSBBA}& &\multicolumn{3}{c}{BARON}\\
 \cline{ 3-5} \cline{7-9}  \multicolumn{1}{c}{}
   &&Avg.Iter  &Avg.Time &$\ln$(Opt.val)& &Avg.Iter  &Avg.Time  &$\ln$(Opt.val)  \\
 \midrule
2 & (10,1000) & 37.2  & 2.4331   & 3.2272  &  & 40.5  & 52.115   & 3.2272 \\
  & (10,2000) & 45.8  & 3.6634   & 4.3241  &  & 58.4 & 323.756  & 4.3241 \\
3 & (10,1000) & 248.6  & 16.1983   & 6.7526  &  & $-$  & $-$      & $-$       \\
  & (10,2000) & 280.2 & 20.0227  & 7.9307  &  & $-$  & $-$      & $-$       \\
4 & (10,1000) & 1787.2  & 112.3951  & 9.5960  &  & $-$  & $-$      & $-$       \\
  & (10,2000) & 2773.2  & 231.6932  & 12.1181 &  & $-$  & $-$      & $-$       \\
5 & (10,1000) & 6040.9  &356.5587   & 13.9486 &  & $-$  & $-$      & $-$       \\
  & (10,2000) & 7077.7  &528.1786   & 15.4777  &  & $-$  & $-$      & $-$       \\
6 & (10,1000) & 10833.6 &719.1481  & 17.4058 &  & $-$  & $-$      & $-$       \\
  & (10,2000) & 11596.4&1002.3876   &19.3237      &  & $-$  & $-$      & $-$       \\
7 & (10,1000) & 28734.3&2350.9825  &21.9411      &  & $-$  & $-$      & $-$       \\
  & (10,2000) & 36870.1& 3382.1985   &24.3913      &  & $-$  & $-$      & $-$       \\
\bottomrule
\end{tabular}
\end{table}
\begin{table}[h!]\tiny
\centering\caption{Numerical comparison of OSSBBA with the algorithms in \cite{5B,25B} on P2   \label{TB5}}
\begin{tabular}{lllllllllllllll}
\toprule
\multirow{2}*{\textbf{$(p,m,n)$}}&\multicolumn{3}{c}{OSSBBA}&
  &\multicolumn{3}{c}{Ref.\cite{5B}}  &\multicolumn{4}{c}{Ref.\cite{25B}}\\
 \cline{ 2-4} \cline{6-8}  \cline{10-12}\multicolumn{1}{c}{}
   &Avg.Iter  &Avg.Time &$\ln$(Opt.val)& &Avg.Iter  &Avg.Time  &$\ln$(Opt.val)  & &Avg.Iter  &Avg.Time  &$\ln$(Opt.val) \\
\midrule
(2,10,100)   & 28.2  & 0.9808  & 1.0453   &  & 19.0  & 0.1976   & 1.0453   &  & 17.1  & 0.1462   & 1.0453   \\
(2,30,300)   & 44.3  & 1.2296  & 3.1587   &  & 24.6  & 0.5672   & 3.1587   &  & 18.6  & 0.4302   & 3.1587   \\
(2,50,500)   & 45.6  & 1.5945  & 4.7409   &  & 21.3  & 1.4750   & 4.7409   &  & 17.0  & 1.2160   & 4.7409   \\
(2,80,800)   & 55.1  & 3.1248  & 5.4405   &  & 25.4  & 6.2590   & 5.4405   &  & 16.8  & 3.9059   & 5.4405   \\
(2,100,1000) & 45.4  & 4.3807  & 5.9066   &  & 21.2  & 9.7774   & 5.9066   &  & 15.4  & 6.5715   & 5.9066   \\
(2,200,2000) & 51.5  & 28.4199 & 7.2213   &  & 19.4  & 76.5304  & 7.2213   &  & 15.1  & 52.4561  & 7.2214   \\
(3,10,100)   & 155.8 & 5.4686  & 2.6422   &  & 101.9 & 1.1158   & 2.6422   &  & 70.0  & 0.7261   & 2.6422   \\
(3,30,300)   & 173.6 & 5.4350  & 6.3552   &  & 90.0  & 2.6698   & 6.3552   &  & 56.4  & 1.5133   & 6.3552   \\
(3,80,800)   & 264.0 & 16.2792 & 9.3900  &  & 79.8  & 20.0341  & 9.3900  &  & 53.0  & 12.7506  & 9.3900  \\
(3,100,1000) & 214.6 & 18.0938 & 9.8526  &  & 72.0  & 33.6091  & 9.8526  &  & 45.1  & 18.5145  & 9.8526  \\
(3,200,2000) & 256.2 & 109.1915 & 12.5079   &  & 81.4  & 326.9412 &12.5080  &  & 51.4  & 182.3114 & 12.5079   \\
(4,10,100)   & 740.3 & 17.9763 & 6.4970   &  & 283.4 & 2.1674   & 6.4970   &  & 159.6 & 1.1035   & 6.4970   \\
(4,30,300)   & 844.8 & 25.0594 & 9.6558  &  & 292.6 & 7.5720   & 9.6558   &  & 155.7 & 3.8255   & 9.6558  \\
(4,80,800)   & 753.1 & 43.5740 &13.1890 &  & 225.4 & 56.4345  & 13.1890 &  & 115.2 & 26.2765  & 13.1890 \\
(4,100,1000) & 870.4 & 67.3168  &14.3808  &  & 283.9 & 130.4006 & 14.3808 &  & 140.0 & 56.5991  & 14.3808  \\
(4,200,2000) & 921.2 & 432.0423 & 17.0713 &  & 232.8 & 936.3903 & 17.0713  &  & 116.3 & 399.4686 & 17.0713 \\
 \bottomrule
\end{tabular}
\end{table}
\begin{table}[h!]\scriptsize
\caption{ Numerical comparison between OSSBBA and Ref. \cite{25B} on P3 with $p=2$ \label{TB6}}
\centering
\begin{tabular}{llllllllllll}
\toprule
\multirow{2}*{\textbf{$\bar{p}$}}&\multirow{2}*{\textbf{$(m,n)$}}
 &
 &\multicolumn{1}{c}{OSSBBA}
 &\multicolumn{5}{c}{Ref. \cite{25B}}\\
\cline{3-5} \cline{7-9} \multicolumn{1}{c}{}
 &&Avg.Iter&Avg.Time&Opt.val &&Avg.Iter&Avg.Time&Opt.val\\
 \midrule
1 & (10,100)   & 10.0 & 5.4211   & 0.0110   &  & 11.6 & 0.1138   & 0.0110   \\
1 & (20,200)   & 11.6 & 8.3662   & 0.0692   &  & 43.0 & 0.9960   & 0.0692   \\
1 & (30,300)   & 13.0 & 7.5526   & 0.0800   &  & 70.2 & 4.1061   & 0.0800   \\
1 & (40,400)   & 14.0 & 8.9950   & 0.1430   &  & 69.6 & 8.4874   & 0.1430   \\
1 & (50,500)   & 11.6 & 9.1429   & 0.0987   &  & 53.6 & 13.4372  & 0.0987   \\
1 & (80,800)   & 9.8  & 11.4018  & 0.3060   &  & 41.6 & 41.0454  & 0.3060   \\
1 & (100,1000) & 10.0 & 13.1412  & 0.0114   &  & 37.6 & 49.0706  & 0.0114   \\
1 & (200,2000) & 17.6 & 117.8562 & 0.0013   &  & 55.4 & 302.1039 & 0.0013   \\
1 & (300,3000) & 10.0 & 158.2001 & 0.8862   &  & 31.6 & 925.3419 & 0.8862   \\
2 & (10,100)   & 16.1 & 1.3095   & 0.3988   &  & 5.2  & 0.0603   & 0.3988   \\
2 & (20,200)   & 61.2 & 2.2489   & 1.2245   &  & 14.6 & 0.4092   & 1.2245   \\
2 & (30,300)   & 56.0 & 1.7469   & 2.0752   &  & 15.0 & 0.8826   & 2.0752   \\
2 & (40,400)   & 69.4 & 2.1826   & 8.3702   &  & 21.8 & 2.6910   & 8.3702   \\
2 & (50,500)   & 46.8 & 3.1453   & 0.3755   &  & 13.1 & 3.2455   & 0.3755   \\
2 & (80,800)   & 53.0 & 4.8080   & 0.2231   &  & 16.9 & 17.8305  & 0.2231   \\
2 & (100,1000) & 57.6 & 6.4876   & 7.0978   &  & 15.4 & 24.8842  & 7.0978   \\
2 & (200,2000) & 66.2 & 31.4479  & 165.4504 &  & 16.6 & 142.7226 & 165.4512 \\
2 & (300,3000) & 74.4 & 135.8912 & 5.9284   &  & 16.6 & 444.1219 & 5.9284  \\

\bottomrule
\end{tabular}
\end{table}
\begin{table}[h!]\scriptsize
\caption{ Numerical comparison between OSSBBA and Ref. \cite{25B} on P3 with $p=3$ \label{TB7}}
\centering
\begin{tabular}{llllllllllll}
\toprule
\multirow{2}*{\textbf{$\bar{p}$}}&\multirow{2}*{\textbf{$(m,n)$}}
 &
 &\multicolumn{1}{c}{OSSBBA}
 &\multicolumn{5}{c}{Ref. \cite{25B}}\\
\cline{3-5} \cline{7-9} \multicolumn{1}{c}{}
 &&Avg.Iter&Avg.Time&Opt.val &&Avg.Iter&Avg.Time&Opt.val\\
 \midrule
1 & (10,100)   & 14.6  & 6.2924   & 0.0073     &  & 84.4  & 0.8711    & 0.0073     \\
1 & (20,200)   & 14.9  & 7.8065   & 0.0084     &  & 104.7 & 2.5243    & 0.0084     \\
1 & (30,300)   & 13.0  & 7.4576   & 0.2038     &  & 155.1 & 6.9968    & 0.2038     \\
1 & (40,400)   & 14.4  & 10.2515  & 0.0035     &  & 259.4 & 30.6222   & 0.0035     \\
1 & (50,500)   & 14.8  & 11.3020  & 0.0007     &  & 304.2 & 74.0663   & 0.0007     \\
1 & (80,800)   & 14.6  & 15.9440  & 0.0136     &  & 182.6 & 143.9457  & 0.0136     \\
1 & (100,1000) & 14.8  & 21.2610  & 0.0008     &  & 151.0 & 195.1263  & 0.0008     \\
1 & (200,2000) & 14.0  & 85.2749  & 0.2153     &  & 179.2 & 1096.0737 & 0.2153     \\
1 & (300,3000) & 14.5  & 272.1227 & 0.0145     &  & $-$   & $-$       & $-$        \\
2 & (10,100)   & 78.8  & 39.6242  & 0.0987     &  & 136.7 & 1.2844    & 0.0987     \\
2 & (30,300)   & 112.0 & 63.8271  & 0.0086     &  & 226.6 & 13.4809   & 0.0086     \\
2 & (50,500)   & 101.8 & 64.0586  & 0.1519     &  & 208.8 & 54.7808   & 0.1519     \\
2 & (80,800)   & 111.6 & 98.0348  & 0.7200     &  & 245.4 & 164.6378  & 0.7200     \\
2 & (100,1000) & 130.1 & 107.5559 & 0.4779     &  & 161.8 & 346.0197  & 0.4779     \\
2 & (200,2000) & 101.8 & 321.0563 & 3.8493     &  & 183.1 & 2056.3639 & 3.8493     \\
2 & (300,3000) & 96.0  & 689.8087 & 84.2601    &  & $-$   & $-$       & $-$        \\
3 & (10,100)   & 234.6 & 6.5275   & 0.7453     &  & 39.4  & 0.4109    & 0.7453     \\
3 & (30,300)   & 261.8 & 7.4203   & 0.9984     &  & 50.0  & 3.3524    & 0.9984     \\
3 & (50,500)   & 308.0 & 10.6432  & 39.3818    &  & 74.8  & 19.6266   & 39.3818    \\
3 & (80,800)   & 306.6 & 16.0365  & 94.7312    &  & 75.2  & 78.5351   & 94.7312    \\
3 & (100,1000) & 527.6 & 36.0380  & 1349.0448  &  & 115.6 & 207.1581  & 1349.0581  \\
3 & (200,2000) & 648.8 & 188.7801 & 17970.9364 &  & 143.2 & 1564.3865 & 17971.2922\\

\bottomrule
\end{tabular}
\end{table}
\begin{table}[p!]\scriptsize
\caption{ Numerical comparison between OSSBBA and Ref. \cite{25B} on P3 with $p=4,5$ \label{TB8}}
\centering
\begin{tabular}{llllllllllll}
\toprule
\multirow{2}*{\textbf{$\bar{p}$}}&\multirow{2}*{\textbf{$(p,m,n)$}}
 &
 &\multicolumn{1}{c}{OSSBBA}
 &\multicolumn{5}{c}{Ref. \cite{25B}}\\
\cline{3-5} \cline{7-9} \multicolumn{1}{c}{}
 &&Avg.Iter&Avg.Time&Opt.val &&Avg.Iter&Avg.Time&Opt.val\\
 \midrule
1 & (4,10,100)   & 13.8    & 10.4223 & 0.0000     &  & 139.0  & 1.1912    & 0.0000     \\
1 & (4,30,300)   & 15.8    & 9.3492           & 0.0353     &  & 381.1  & 21.8723   & 0.0353     \\
1 & (4,50,500)   & 15.2    & 11.3419          & 0.0001     &  & 439.2  & 89.0626   & 0.0001     \\
1 & (4,80,800)   & 17.6    & 18.0001          & 0.0006     &  & 416.4  & 229.2804  & 0.0006     \\
1 & (4,100,1000) & 17.2    & 23.2597          & 0.0001     &  & 394.8  & 251.8559  & 0.0001     \\
2 & (4,10,100)   & 50.5    & 24.0921          & 0.0900     &  & 413.4  & 3.5112    & 0.0900     \\
2 & (4,30,300)   & 108.8   & 55.4369          & 0.1031     &  & 354.2  & 23.6881   & 0.1031     \\
2 & (4,50,500)   & 139.8   & 92.4701          & 0.0530     &  & 388.8  & 104.6994  & 0.0530     \\
2 & (4,80,800)   & 122.6   & 123.1584         & 0.0098     &  & 397.7  & 252.2755  & 0.0098     \\
2 & (4,100,1000) & 109.5   & 127.7022         & 0.0000     &  & 565.8  & 872.2801  & 0.0000     \\
4 & (4,10,100)   & 951.6   & 25.0941          & 3.2922     &  & 83.2   & 0.7800    & 3.2922     \\
4 & (4,30,300)   & 2533.2  & 79.6079          & 1.0912     &  & 242.8  & 17.7726   & 1.0912     \\
4 & (4,50,500)   & 2663.4  & 91.9915          & 3.5358     &  & 283.2  & 81.6886   & 3.5358     \\
4 & (4,80,800)   & 2385.0  & 126.9785         & 9579.8666  &  & 248.4  & 239.9699  & 9579.8668  \\
4 & (4,100,1000) & 3131.8  & 217.3545         & 4.4266     &  & 327.8  & 648.6133  & 4.4266     \\
1 & (5,10,100)   & 15.4    & 8.1372           & 0.0000     &  & 157.0  & 1.1772    & 0.0000     \\
1 & (5,30,300)   & 14.8    & 8.3628           & 0.0000     &  & 742.4  & 48.5301   & 0.0000     \\
1 & (5,50,500)   & 17.2    & 12.3179          & 0.0000     &  & 795.6  & 114.1040  & 0.0000     \\
1 & (5,80,800)   & 18.4    & 19.4651          & 0.0001     &  & 307.2  & 105.7859  & 0.0001     \\
1 & (5,100,1000) & 14.6    & 23.1413          & 0.0000     &  & 156.4  & 99.0862   & 0.0000     \\
3 & (5,10,100)   & 856.6   & 416.5373         & 1.0672     &  & 2353.0 & 22.0411   & 1.0672     \\
3 & (5,30,300)   & 1827.6  & 975.8683         & 0.0190     &  & 6588.2 & 479.5366  & 0.0190     \\
3 & (5,50,500)   & 1053.6  & 673.0753         & 0.3176     &  & 4090.4 & 862.9126  & 0.3176     \\
3 & (5,80,800)   & 920.8   & 830.4220         & 0.1186     &  & 2890.2 & 2084.5160 & 0.1186     \\
3 & (5,100,1000) & 1076.3  & 1022.3382        & 0.5510     &  & 0.0    & 0.0000    & 0.0000     \\
5 & (5,10,100)   & 7174.0  & 184.4887         & 9.5227     &  & 240.6  & 2.1863    & 9.5227     \\
5 & (5,30,300)   & 12280.6 & 392.9769         & 11.1198    &  & 456.4  & 32.8679   & 11.1198    \\
5 & (5,50,500)   & 10618.4 & 379.4513         & 947.9276   &  & 573.8  & 158.0420  & 947.9276   \\
5 & (5,80,800)   & 15302.3 & 791.5586         & 180.9346   &  & 587.7  & 612.5000  & 180.9346   \\
5 & (5,100,1000) & 16761.8 & 967.8614         & 11282.6541 &  & 731.2  & 1392.7121 & 11282.6542\\

\bottomrule
\end{tabular}
\end{table}

Overall, BARON is only more suitable for solving small-scale GLMP problems than our algorithm, as shown in some numerical results in Tables \ref{TB1}, \ref{TB2}, \ref{TB4}.
More bluntly, the results in these three Tables show that the computational power of Algorithm  OSSBBA  is weaker than that of BARON in solving most small-scale problems (especially when $p= 2$). However,
However, the numerical results in Table \ref{TB2} reveal that at least when the number of variables $n$ exceeds 300, our algorithm is more computable than BARON.
Naturally, the numerical results in Table \ref{TB4} additionally demonstrate this for different sizes of $p$. Especially when $p\geq3$, neither BARON can solve all problems in 3600s.  Although it is not appropriate for dealing with large-scale P2 problems, this package undoubtedly has the potential to address some P2 problems on a smaller scale.
Nevertheless, it becomes clear that they are extremely close when comparing the optimal values of Algorithm OSSBBA with the software. This serves for verifying the feasibility of our algorithm once more.

Table \ref{TB3} compares the numerical results of the OSSBBA algorithm with the two in \cite{5B,25B} when solving the P1 problem.
For the small-scale problems in the first 11 groups, it can be viewed that our algorithm has marginally less computational power than the other two algorithms, both in terms of the number of iterations and the CPU runtime during execution.
While our algorithm still requires more iterations than the other two when $n\geq3$, it uses less CPU time overall.
This is primarily due to the fact that for each iteration of the OSSBBA algorithm, only one linear programming problem and two small-scale convex  programming problems must be solved, while for the other two algorithms, two larger linear programs have to be addressed.
The same phenomenon is reflected in the numerical results of problem P2 in Table \ref{TB5}, nevertheless for the reason the size of $p$ has a significant impact on the performance of the three algorithms, the scope of application of the OSSBBA is limited, but it is still more suitable than the two algorithms in \cite{5B,25B} especially for particular instances of the large-scale problem, such as when $(m,n)=(100,1000),(200,2000)$, the amount of time required via the algorithm in \cite{5B} is nearly twice or three times that of our algorithm.

Essentially, when the objective  function of the problem GLMP possesses both positive and negative exponents, the algorithm only branches in the $\bar{p}$-dimensional space, where  $\bar{p}$ is defined before Eq. (\ref{Jeq1}) and cannot be greater than $p$.
From Tables \ref{TB3} and \ref{TB5}, it is known that the algorithm in \cite{25B} is always more efficient than that in  \cite{5B}, so that we compare only the OSSBBA with the former in the remainder of the numerical results   on the problem  P3 (see  Tables \ref{TB6}-\ref{TB8}).

Looking at Tables  \ref{TB6}, \ref{TB7} and \ref{TB8}, our algorithm performs better when $\bar{p}$ is far from $p$, while the algorithm in \cite{25B} is the opposite.  However,  it must be pointed out that in the case of $\bar{p}=p$, the numerical results in Tables  \ref{TB6}-\ref{TB8} show similar phenomena as in Tables  \ref{TB1}-\ref{TB5}. That is, although the algorithm in \cite{25B} has fewer iterations than our algorithm, it is only more suitable for solving small-scale problems.
At the same time, the linear relaxation subproblems of the algorithm in \cite{25B} is derived from the convex envelope of natural logarithms, which is the tightest relaxation; besides, our algorithm does not solve the convex optimization problem, but only solves a linear programming problem (\ref{zeqj}) of the same size as GLMP and two small-scale convex optimization problems (\ref{eqj5}). As a result, when solving certain large-scale problems P3 with $\bar{p}=p$ (e.g., $(p,m,n)=(2,100,1000)$, $(2,200,2000)$, $(2,300,3000)$, $(3,100,1000)$, $(3,200,2000)$, $(4,100,1000)$, $(5,100,1000)$), although our  Avg.Iter is higher compared to the algorithm in \cite{25B}, the Avg.Time is lower than the latter.

Further, when $\bar{p}<p$, the subproblem (\ref{zeqj}) solved by our algorithm becomes a convex optimization problem, but the branching operation acts in a $\bar{p}$-dimensional space, resulting in very few iterations of our algorithm. On the other hand, another algorithm branches in a $p$-dimensional space, which requires a large number of iterations.
At this time, due to the inability of the convex envelope to be adopted on the natural logarithms corresponding to negative exponents, the advantage of the relaxation subproblem in the algorithm \cite{25B} no longer exists.
 In terms of CPU running time, algorithm OSSBBA seems more suitable for solving some large-scale problems in all cases of problem P3 compared to another algorithm.

 In addition to the above analysis, it has to be mentioned that the numerical results in the above tables illustrate that the larger the number of linear terms $p$ in the GLMP problem, the worse the computational efficiency of the two algorithms of Jiao et al.
 Correspondingly, the computational efficiency of the algorithm OSSBBA is only related to the number of positive exponents $\bar{p}$, and is not directly related to the size of $p$ (e.g., in the case of $(\bar{p},p)=(1,5)$, our algorithm is much better than the algorithm in \cite{25B}).
 Besides,  when $\bar{p}$ and $p$ is fixed, as $(m,n)$ increase, both CPU time and iterations  increase.
 Note that, the algorithm in \cite{25B} is more suitable for solving small-scale GLMP problems with positive exponents than our algorithm.
 These facts are clarifying that different algorithms have their own solving capabilities and scope of application.

In summary, our algorithm can effectively solve GLMP-like problems, especially for larger-scale problems, it has stronger computational efficiency than the algorithms in \cite{5B,25B} and  the commercial software package BARON.

\section{Conclusions}
This paper introduces a global algorithm for solving the GLMP problem by incorporating the new convex relaxation subproblem, simplex branching operation and pruning operation into the BB framework.
Under the given tolerance, the finite global convergence of the algorithm and its computational complexity analysis are proved in detail.
 Numerical experiments demonstrate that the proposed algorithm is feasible for all test problems, and saves more computational time than the algorithms in BARON and \cite{5B,25B} when solving specific large-scale GLMP instances.
  Furthermore, it is observed that the number of iterations and CPU time gradually increase as the number of positive exponents in the objective function increases.
Future research will focus on developing effective global algorithms for more general GLMP problems using new relaxation strategies and BB techniques.


\section*{Funding}
This research is supported by the National Natural Science Foundation of China under Grant [grant number 12301401].

\section*{Data availability statement}
Supporting data for this study are available from the corresponding author, as reasonably requested.

 \begin{acknowledgements}
We thank each anonymous reviewer for their valuable comments and suggestions, which will help improve the quality of the papers.
\end{acknowledgements}



\begin{thebibliography}{}


\bibitem{5}
Bennett,K.: Global tree optimization: A non-greedy decision tree algorithm.
  Comput. Sci. Stat. 26, 156-160  (1994)

\bibitem{6}
Benson, H.P.: Vector maximization with two objective functions. J. Optim.
  Theory Appl. 28,253-257  (1979)

\bibitem{41}
Benson, H.P.:  Decomposition branch-and-bound based algorithm for linear
  programs with additional multiplicative constraints. J. Optim. Theory Appl.
  126, 41-61  (2005)

\bibitem{32}
Benson, H.P.,  Boger, G.M.: Multiplicative programming problems: analysis and
  efficient point search heuristic. J. Optim. Theory Appl. 94,
  487-510  (1997)

\bibitem{33}
Benson, H.P.,  Boger, G.M.: Outcome-space cutting-plane algorithm for linear
  multiplicative programming. J. Optim. Theory Appl. 104, 301-332 (2000)




\bibitem{23}
Chen,Y., Jiao, H.: A nonisolated optimal solution of general linear
  multiplicative programming problems. Comput. Oper. Res. 36,
  2573-2579 (2009)


\bibitem{7}
Dennis, D.F.: Analyzing Public Inputs to Multiple Objective Decisions on
  National Forests Using Conjoint Analysis. Forest Sci. 44,
  421-429  (1998)

\bibitem{8}
Dorneich, M.C., Sahinidis, N.V.: Global optimization algorithms for chip
  layout and compaction. Eng. Optimiz. 25, 131-154  (1995)

\bibitem{43}  Gleixner, A., Eifler, L., Gally, T., Gamrath, G., Gemander, P., Gottwald, R.L., Hendel, G., Hojny,
C., Koch, T., Miltenberger, M., M$\ddot{u}$ller, B., Pfetsch, M-E., Puchert, C., Rehfeldt, D., Schlsser, F.,
Serrano, F., Shinano, Y., Viernickel, J.M., Vigerske, S., Weninger, D., Witt, J-T., Witzig, J.: The SCIP
Optimization Suite. https://www.scipopt.org/index.php/download,v5.0.1 (2017)

\bibitem{27}
Gao, Y., Xu, C., Yang,Y.: An outcome-space finite algorithm for solving
  linear multiplicative programming: Appl. Math. Comput. 179,
  494-505  (2006)
\bibitem{52}
Gao, Y., Wu, G., Ma, W.:  A new global optimization approach for
  convex multiplicative programming.  Appl. Math. Comput. 216,
  1206-1218  (2010)

\bibitem{22}
Jiao, H.: A branch and bound algorithm for globally solving a class of
  nonconvex programming problems. Nonlinear Anal. 70,  1113--1123 (2009)

\bibitem{5B}
Jiao, H., Wang, W., Yin, J., Shang, Y.L.:  Image space
  branch-reduction-bound algorithm for globally minimizing a class of
  multiplicative problems. RAIRO-Oper. Res. 55, 1533--1552  (2022)


\bibitem{25B}
Jiao, H., Wang, W., Shang, Y.L.:  Outer space branch-reduction-bound algorithm for solving generalized affine multiplicative problems. Journal of Computational and Applied Mathematics. 419, 114784  (2023)

\bibitem{Baron}  Khajavirad A, Sahinidis NV. A hybrid LP/NLP paradigm for global optimization
relaxations. Math Program Comput 2018;10(3):383--421.

\bibitem{35}
Konno, H., Yajima, Y., Matsui, T.: Parametric simplex algorithms for
  solving a special class of nonconvex minimization problems. J. Global Optim.
  1,65--81 (1991)

\bibitem{28}
Kuno, T., Yajima, Y., Konno, H.: An outer approximation method for
  minimizing the product of several convex functions on a convex set. J.
  Global Optim. 3, 325--335 (1993)

\bibitem{9}
Kuno, T.: Globally determining a minimum-area rectangle enclosing the
  projection of a higher-dimensional set.  Oper. Res. Lett. 13,
  295--303  (1993)

\bibitem{2}
Konno, H., Kuno, T., Yajima, Y.: Global optimization of a generalized
  convex multiplicative function. J. Global Optim. 41, 47--62 (1994)


\bibitem{31}
Liu, X.J., Umegaki, T., Yamamoto, Y.: Heuristic methods for linear
  multiplicative programming. J. Global Optim. 15, 433--447 (1999)

\bibitem{38}
Liu, S.Y, Zhao, Y.F.:  An efficient algorithm for globally solving generalized
  linear multiplicative programming. J. Comput. Appl. Math. 296,
  840--847  (2016)

\bibitem{47}
Loridan, P.: Necessary conditions for $\epsilon$-optimality. Math. Program.
  Stud. 19, 140--152  (1982)



\bibitem{17}
Matsui, T.: NP-Hardness of linear multiplicative programming and related
  problems. J. Global Optim. 9, 113--119 (1996)

\bibitem{4}
Mulvey, J.M., Vanderbei, R.J., Zenios, S.A.: Robust Optimization of
  Large-Scale Systems. Oper. Res. 43, 264--281 (1995)



\bibitem{1}
Maranas, C.D., Androulakis, I.P., Floudas,C.A.:  Solving long-term
  financial planning problems via global optimization. J. Econ. Dyn. Control.
  21, 1405--1425 (1997)

  \bibitem{4n}
Mahmoodian, V., Charkhgard, H., Zhang, Y.:  Multi-objective Optimization
  Based Algorithms for Solving Mixed Integer Linear Minimum Multiplicative
  Programs, Comput. Oper. Res. 128,  105178 (2021)


\bibitem{PhuongNTuy2003}
Phuong, N., Tuy, H.: A Unified Monotonic Approach to Generalized Linear
  Fractional Programming, J. Global Optim.  26~(3), 229--259 (2003)





\bibitem{3}
Rau N., Layard, P., Walters, A.:  Microeconomic Theory. Economica. 47,
 211  (1980)



\bibitem{48}
Ryoo H.S., Sahinidis, N.V.:  Analysis of bounds for multilinear functions.
  J. Global Optim. 19, 403--424  (2001)



\bibitem{39}
Shao, L., Ehrgott, M.: Primal and dual multi-objective linear programming
  algorithms for linear multiplicative programmes. Optimization. 65,
  415--431 (2016)


\bibitem{SJ2006YG}Shen, P.P., Jiao, H.W.: Linearization method for a class of multiplicative
  programming with exponent, Appl. Math. Comput.   183~(1),  328--336 (2006)


\bibitem{24}
Shen, P.P., Bai, X., Li, W.: A new accelerating method for globally solving a
  class of nonconvex programming problems. Nonlinear Anal. 71,
  2866--2876 (2009)

\bibitem{42}
 Shen, P.P., Wang, K., Lu, T.: Outer space branch and bound algorithm for solving linear
multiplicative programming problems. J. Global Optim. 3, 1--30 (2020)


\bibitem{40}
Shen, P.P., Huang, B.D., Wang, L.F.: Range division and linearization algorithm for a class of linear ratios optimization problems. J. Comput. Appl. Math.  350, 324--342 (2019)



\bibitem{25}
 Wang, C.F., Liu, S.Y, Shen P.P.: Global minimization of a generalized
  linear multiplicative programming.  App. Math. Model. 36,
  2446--2451 (2012)

 \bibitem{45}
 Wang, C.F., Liu, S.Y.:  A new linearization method for generalized linear
  multiplicative programming. Comput. Oper. Res. 38, 1008--1013 (2011)


\bibitem{26}
 Wang, C.F., Bai, Y.Q., Shen P.P.:  A practicable branch-and-bound algorithm
  for globally solving linear multiplicative programming.  Optimization. 66,
 397--405  (2017)






\bibitem{37}
Youness, E.A.: Level set algorithm for solving convex multiplicative
  programming problems. Appl. Math. Comput. 167, 1412--1417 (2005)


\bibitem{34}
Zhang, B., Gao, Y.L., Liu, X., Huang, X.L.: Output-space branch-and-bound
reduction algorithm for a class of linear multiplicative programs. Mathematics. 8, 315 (2020)


\bibitem{Dickinsonhapter3}  Dickinson, P.J.: On the exhaustivity of simplicial partitioning. J. Global Optim. 58(1), 189--203 (2014)







%
%
%
%
%
%
%
%
%
%
%
%






\end{thebibliography}
\end{document}